\journal{~}
\theoremstyle{plain}
\newtheorem{theorem}{Theorem}[section]
\newtheorem{lemma}[theorem]{Lemma}
\newtheorem{corollary}[theorem]{Corollary}
\newtheorem{observation}[theorem]{Observation}
\theoremstyle{definition}
\newtheorem{definition}[theorem]{Definition}
\newtheorem{question}[theorem]{Question}
\theoremstyle{remark}
\title{On the connected coalition number}
\author{Xiaxia Guan$^a$,  Maoqun Wang$^b$\footnote{Corresponding author.} \\
  \small $^a$Department of Mathematics, Taiyuan University of
Technology, Taiyuan-030024, PR China \\
  \small $^b$School of Mathematics and Information Sciences, Yantai University, Yantai-264005, PR China\\
 \small \emph{Email}: gxx0544@126.com; wangmaoqun@ytu.edu.cn}
\begin{document}
\begin{abstract}
For a graph $G=(V,E)$, a pair of vertex disjoint sets $A_{1}$ and $A_{2}$ form a connected coalition of $G$, if $A_{1}\cup A_{2}$ is a connected dominating set, but neither $A_{1}$ nor $A_{2}$ is a connected dominating set. A connected coalition partition of $G$ is a partition $\Phi$ of $V(G)$ such that each set in $\Phi$ either consists of only a singe vertex with the degree $|V(G)|-1$, or forms a connected coalition of $G$ with another set in $\Phi$. The connected coalition number of $G$, denoted by $CC(G)$, is the largest possible size of a connected coalition partition of $G$. In this paper, we characterize graphs that satisfy $CC(G)=2$. Moreover, we obtain the connected coalition number for unicycle graphs and for the corona product and join of two graphs. Finally, we give a lower bound on the connected coalition number of the Cartesian product and the lexicographic product of two graphs. 
\end{abstract}

\begin{keyword}
coalition\sep connected coalition partition\sep corona product\sep join
\MSC 05C69\sep 05C85
\end{keyword}

\maketitle
\section{Introduction}
\noindent

Let $G$ be a graph. We denote by $V(G)$ and $E(G)$ the vertex set and edge set of $G$, respectively, and call $|V(G)|$ the order of $G$.  A {\it neighbour} of a vertex $v$ is a vertex adjacent to $v$. The {\it degree} of a vertex $v\in V$, denoted by $deg(v)$, is the number of its neighborhoods. A vertex with degree $|V(G)|-1$ in a graph $G$ is called a \emph{full vertex}. A vertex $v$ in $G$ is referred to as a \emph{pendant vertex} if $deg(v)=1$. For a vertex subset $S\subseteq V$, the subgraph induced by $S$, denoted by $G[S]$, is the subgraph whose vertex set is $S$ and whose edge set consists of all edges of $G$ which have both ends in $S$. The subgraph $G-S$ is the subgraph obtained by removing all vertices in $S$ and removing all edges incident with some vertex in $S$ from the graph $G$.

Many questions in combinatorics can be described as a certain type of domination problems in graphs. There is a vast literature on the various domination, see for instance  five fundamental books \cite{Du,Haynes1,Haynes2,Haynes3,Henning} and two surveys \cite{Goddard,Henning0}. In this paper, we study the connected coalition number of graphs, introduced recently by Alikhani, Bakhshesh, Golmohammadi and Konstantinova \cite{Alikhani},
similar to the coalition number. We only consider simple and finite graphs throughout this paper. Definitions which are not given here may be found in \cite{bondy}. Cockayne and Hedetniemi \cite{Cockayne} defined the domatic number of a graph. Later, the connected domatic number of a graph is introduced by Zelinka \cite{Zelinka}.

\begin{definition}
 Let $G$ be a graph. A vertex subset $S\subseteq V(G)$ is called \emph{a dominating set} of $G$, if  for each vertex $v\in V(G)\backslash S$, there exists at least one vertex $u\in S$ with $uv\in E(G)$. A vertex subset $S$ is called \emph{a connected dominating set} of $G$, if $S$ is a dominating set and $G[S]$ is connected. A \emph{connected domatic partition} of $G$ is a partition of $V(G)$ into connected dominating sets. The \emph{connected domatic number} of $G$, denoted by $d_{c}(G)$, is the maximum size of a connected domatic partition in $G$.
\end{definition}

We refer the readers to \cite{Hartnell,Zelinka1,Zelinka2,Zelinka} for more details and results on the domatic number and the connected domatic number of a graph. Haynes et al. \cite{Haynes} first introduced the concept of coalitions and coalition partitions in the field of graph theory. Later, the coalition number of some families of graphs is researched, see \cite{Alikhani2,Bakhshesh,Haynes4,Haynes5}. In 2022, Alikhani et al. \cite{Alikhani1} introduced the concept of total coalitions of a graph. In 2023, Bar\'{a}t and Bl\'{a}zsik \cite{Barat} obtained a general sharp upper bound on the total coalition number as a function of the maximum degree. Recently, Alikhani et al. \cite{Alikhani} introduced the concept of connected coalitions and connected coalition partitions in a graph.

\begin{definition}
 Let $G$ be a graph. A pair of vertex disjoint sets $A_{1}$ and $A_{2}$ form a {\it connected coalition} of $G$, if $A_{1}\cup A_{2}$ is a connected dominating set, but neither $A_{1}$ nor $A_{2}$ is a connected dominating set. A partition $\Phi=\{A_{1},A_{2},\ldots,A_{k}\}$ of $V(G)$ is called a \emph{connected coalition partition} of $G$, if for each set $A_{i}\in \Phi$, either $A_{i}=\{v\}$ for some full vertex $v$ of $G$, or $A_{i}$ and $A_{j}$ form a connected coalition of $G$ for another set $A_{j}\in\Phi$. The \emph{connected coalition number} of a graph $G$, denoted by $CC(G)$, is the maximum cardinality of a connected coalition partition in $G$. For a connected coalition partition $\Phi$ of $G$, we say that $\Phi$ is a $CC(G)$-\emph{partition} if $|\Phi|=CC(G)$.
\end{definition}

Clearly, the connected coalition number of a graph is at most the number of vertices. This upper bound can be obtain for complete graphs and complete bipartite graphs $K_{m,n}$ with $2\leq m\leq n$. Alikhani et al. \cite[Lemma 1]{Alikhani} proved that $CC(G)=1$ if and only if $G=K_{1}$ for any graph $G$. Note that if there is no connected coalition partition for a graph $G$, then $CC(G)=0$. Let $\mathcal{F}$ be a family of graphs $H$ satisfying that the subgraph obtained by removing all full vertices from $H$ is not connected. Alikhani et al. \cite[Theorem 10]{Alikhani} obtained that $CC(G)=0$ if and only if $G\in \mathcal{F}$. Hence, the following statement also holds.

\begin{theorem}\emph{\cite[Theorem 6]{Alikhani}}\label{connected}
If $G$ is a connected graph of order $n\geq 2$ with no full vertex, then $CC(G)\geq 2$.
\end{theorem}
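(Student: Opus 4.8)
The plan is to obtain the statement immediately from the two structural facts quoted above: $CC(G)=1$ holds exactly when $G=K_{1}$ \cite[Lemma 1]{Alikhani}, and $CC(G)=0$ holds exactly when $G\in\mathcal{F}$ \cite[Theorem 10]{Alikhani}, where $\mathcal{F}$ is the family of graphs for which deleting all full vertices yields a disconnected graph. Since $CC(G)$ is by definition a nonnegative integer, it is enough to rule out the values $0$ and $1$. The value $1$ is impossible because $G$ has order $n\ge 2$, so $G\ne K_{1}$. The value $0$ is impossible because $G$ has no full vertex, so deleting all full vertices of $G$ leaves $G$ itself, which is connected by hypothesis; hence $G\notin\mathcal{F}$. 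Therefore $CC(G)\ge 2$.

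For a self-contained argument one would instead exhibit a connected coalition partition $\{A_{1},A_{2}\}$ of size $2$ directly. Since $G$ is connected, $A_{1}\cup A_{2}=V(G)$ is automatically a connected dominating set, so the only task is to split $V(G)$ so that neither part is a connected dominating set. If $G$ has a cut vertex $v$, take $A_{1}=\{v\}$, which is not a connected dominating set because $v$ is not full, and $A_{2}=V(G)\setminus\{v\}$, which is not a connected dominating set because $G-v$ is disconnected. If instead $G$ has two vertices $u$ and $w$ with $d_{G}(u,w)\ge 3$, take $A_{1}=N[w]$ and $A_{2}=V(G)\setminus N[w]\supseteq N[u]$; then $N[u]\cap A_{1}=\emptyset$ and $N[w]\cap A_{2}=\emptyset$, so neither part even dominates $G$.

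The delicate case for an elementary proof is when $G$ is $2$-connected of diameter $2$ with no full vertex, and this is where I expect the main obstacle. In a diameter-$2$ graph at most one part of a bipartition can fail to be a dominating set (two non-dominated vertices, one in each part, would be at distance at least $3$), so a witnessing partition must contain a part that dominates $G$ but induces a disconnected subgraph. For any non-full vertex $v$, the set $A_{2}=\{v\}\cup(V(G)\setminus N[v])$ is such a set — it dominates $G$ (every vertex of $N(v)$ is dominated by $v$) yet $v$ is isolated in $G[A_{2}]$ — and then $A_{1}=N(v)$ completes a connected coalition partition provided $G[N(v)]$ is disconnected for some choice of $v$; dealing with the "locally connected" graphs that evade this requires additional work. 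This difficulty is precisely what the short deduction in the first paragraph sidesteps, so that is the proof I would present.
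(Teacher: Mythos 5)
Your first paragraph is exactly the paper's own argument: the paper states Theorem~\ref{connected} immediately after quoting \cite[Lemma 1]{Alikhani} ($CC(G)=1$ iff $G=K_{1}$) and \cite[Theorem 10]{Alikhani} ($CC(G)=0$ iff $G\in\mathcal{F}$) and derives it with the single word ``Hence,'' which is precisely your deduction ruling out the values $0$ and $1$. The proposal is correct and takes essentially the same approach; the additional constructive discussion is not needed (and, as you note, incomplete in the $2$-connected diameter-$2$ case), but you correctly identify the short deduction as the proof to present.
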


Alikhani et al. \cite{Alikhani} also proved that $CC(G)\geq 2d_{c}(G)$ for any connected graph $G$ of order $n$ with no full vertex, and provided two polynomial-time algorithm to find graphs $G$ with $CC(G)=n-1$ and $CC(G)=n$. For a tree $T$ with order $n$, it is clear that if $n=1$, then $CC(T)=1$; if $n=2$, then $CC(T)=2$. Moveover, if $n\geq 3$ and there is a full vertex in $T$, then $T\in \mathcal{F}$ and hence $CC(T)=0$.
\begin{theorem}\emph{\cite[theorem 17]{Alikhani}}\label{tree}
For any tree $T$ with no full vertex, we have $CC(T)=2$.
\end{theorem}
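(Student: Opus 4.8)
The plan is to establish the two inequalities $CC(T)\ge 2$ and $CC(T)\le 2$ separately. For the lower bound I would first observe that a tree with no full vertex cannot be a star (the center of $K_{1,n-1}$ is a full vertex), so $T$ has order $n\ge 4$; in particular $T$ is a connected graph of order at least $2$ with no full vertex, and Theorem~\ref{connected} immediately gives $CC(T)\ge 2$.

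The bulk of the argument is the upper bound, and the key tool I would prove first is a description of the connected dominating sets of a tree: writing $I(T)$ for the set of non-pendant vertices of $T$, a set $S\subseteq V(T)$ is a connected dominating set of $T$ if and only if $I(T)\subseteq S$. For the forward direction, I would use that for any component $C$ of $T-S$ the acyclicity of $T$ together with the connectedness of $T[S]$ forces exactly one edge of $T$ to run between $C$ and $S$; since every vertex of $C$ must be dominated by $S$, that edge must be incident with every vertex of $C$, so $C$ is a single pendant vertex, whence $V(T)\setminus S$ consists only of pendant vertices, i.e. $I(T)\subseteq S$. For the converse, I would observe that since $T$ is not a star, $I(T)$ is nonempty, the induced subgraph $T[I(T)]$ (the subtree of $T$ obtained by deleting all pendant vertices) is connected, and every pendant vertex of $T$ is adjacent to a vertex of $I(T)$ (its unique neighbour has degree at least $2$ because $n\ge 3$); hence any superset of $I(T)$ induces a connected subgraph and dominates $V(T)$.

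With this characterization, I would argue by contradiction. Suppose $\Phi=\{A_1,\dots,A_k\}$ is a connected coalition partition of $T$ with $k\ge 3$. Since $T$ has no full vertex, each $A_i$ forms a connected coalition with some member of $\Phi$. Relabelling, let $A_1,A_2$ form a connected coalition; then $A_1\cup A_2$ is a connected dominating set, so $I(T)\subseteq A_1\cup A_2$, and since $\Phi$ is a partition, $A_\ell\cap I(T)=\emptyset$ for every $\ell\ge 3$. Now $A_3$ forms a connected coalition with some $A_m$ with $m\ne 3$; then $I(T)\subseteq A_3\cup A_m$, and $A_3\cap I(T)=\emptyset$ forces $I(T)\subseteq A_m$. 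By the characterization, $A_m$ is already a connected dominating set, contradicting the requirement that neither set of a connected coalition is a connected dominating set. Hence $k\le 2$, and combined with the lower bound this yields $CC(T)=2$.

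I expect the only genuine obstacle to be the connected-dominating-set characterization in the second step: in particular, verifying carefully that $T$ not being a star guarantees both that $I(T)$ is nonempty and that $T[I(T)]$ is a connected subtree (and that every pendant vertex has a non-pendant neighbour, which uses $n\ge 3$). Once that lemma is in place, the short counting argument ruling out partitions of size at least $3$ is essentially immediate.
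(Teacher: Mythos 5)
Your proof is correct, but it takes a more self-contained route than the paper. The paper's own proof of this theorem is a two-line application of its Theorem~\ref{cut-set}: the set $X$ of non-pendant vertices of $T$ is exactly the set of cut vertices, it is a connected dominating set, and Theorem~\ref{cut-set} then gives $CC(T)=2$ at once. You instead avoid the general machinery and prove everything from scratch for trees: your characterization of the connected dominating sets of a tree as precisely the supersets of the internal-vertex set $I(T)$ is a tree-specific sharpening of the paper's Lemma~\ref{cut-vertex} (every connected dominating set contains every cut vertex) combined with the observation that $I(T)$ itself is a connected dominating set, and your pigeonhole argument ruling out a partition of size $k\ge 3$ (the partner of $A_3$ must absorb all of $I(T)$ and hence already dominates) is essentially the sufficiency direction of Theorem~\ref{cut-set} specialized to trees, where the paper routes this through Lemma~\ref{cut-vertices} (all cut vertices lie in a common part when $CC(G)\ge 3$). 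The paper's approach buys generality -- the same theorem immediately handles the corona product in Corollary~\ref{corona2} and parts of the unicyclic case -- while yours buys a clean, elementary, fully self-contained proof together with a complete iff description of connected dominating sets in trees, which is a slightly stronger structural fact than the paper needs. All the individual steps you flag as potential obstacles (nonemptiness of $I(T)$, connectedness of $T[I(T)]$, and the single-edge argument for components of $T-S$) check out.
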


In this paper, we give a brief proof of Theorem~\ref{tree} by proving the following result in Section 2.

\begin{theorem}\label{cut-set}
Let $G$ be a connected graph with no full vertex. Let $X=\{v\in V(G)\mid G-v \ \text{is not connected~}\}$. Then $CC(G)=2$ if and only if $X$ is a connected dominating set of $G$.
\end{theorem}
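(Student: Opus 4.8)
The plan is to use Theorem~\ref{connected}, which gives $CC(G)\ge 2$; so the claim is equivalent to saying that $G$ admits a connected coalition partition with at least three parts if and only if $X$ is not a connected dominating set. Both directions hinge on a lemma I would prove first: \emph{every cut vertex of $G$ lies in every connected dominating set of $G$}. Indeed, if a connected dominating set $D$ omitted a cut vertex $x$, then $G[D]$ being connected would force $D$ to lie inside a single component of $G-x$, whence $D$ could not dominate the other components.

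For the direction ``$X$ a connected dominating set $\Rightarrow CC(G)=2$'': I would assume, for a contradiction, a connected coalition partition $\Phi=\{A_1,\dots,A_k\}$ with $k\ge 3$. Since $G$ has no full vertex, each $A_i$ forms a connected coalition with some $A_j$, so no $A_i$ is itself a connected dominating set and $X\subseteq A_i\cup A_j$ by the lemma. Fix $x_0\in X$ with $x_0\in A_1$; then the partner of any $A_i$ ($i\ne 1$) must contain $x_0$, hence equals $A_1$, so $X\subseteq A_1\cup A_i$ for every $i\ne 1$, and intersecting over at least two such indices yields $X\subseteq A_1$. Now $A_1\supseteq X$ is a dominating set but not a connected dominating set, so $G[A_1]$ is disconnected; since $G[X]$ is connected, $X$ lies in one component of $G[A_1]$, and any vertex of another component is neither in $X$ nor adjacent to $X$ --- contradicting that $X$ dominates. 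Hence $CC(G)\le 2$.

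For the direction ``$X$ not a connected dominating set $\Rightarrow CC(G)\ge 3$'', I would first dispose of the case $\kappa(G)=k\ge 2$ (equivalently $X=\emptyset$). Take a minimum vertex cut $S=\{u_1,\dots,u_k\}$; since $G$ has no full vertex it is not complete, so $|V\setminus S|\ge 2$. Then $\{\{u_1\},\dots,\{u_k\},V\setminus S\}$ is a connected coalition partition of size $k+1\ge 3$: $G[V\setminus S]$ is disconnected so $V\setminus S$ is not a connected dominating set; no singleton $\{u_i\}$ is one either (no full vertices); and $\{u_i\}\cup(V\setminus S)=V\setminus(S\setminus\{u_i\})$ is a connected dominating set, because deleting the $k-1$ vertices of $S\setminus\{u_i\}$ from the $k$-connected graph $G$ leaves it connected and, as $\delta(G)\ge k$, each deleted vertex retains at least two neighbours outside $S\setminus\{u_i\}$.

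The remaining --- and hardest --- case is $\kappa(G)=1$, where $X\ne\emptyset$ and $X$ fails either to dominate or to induce a connected subgraph. Here I would aim to split off two non-cut vertices $x_2,x_3$ as singletons: $\{V\setminus\{x_2,x_3\},\{x_2\},\{x_3\}\}$ is a valid partition the moment $V\setminus\{x_2,x_3\}$ is not a connected dominating set, the sets $V\setminus\{x_2\}$ and $V\setminus\{x_3\}$ being automatically connected dominating sets for non-cut $x_2,x_3$. Such a pair is found inside a block: if $X$ is not dominating, pick $v$ with $N[v]\cap X=\emptyset$ (forcing $\deg(v)\ge 2$), which lies in a $2$-connected block $B$ all of whose attachment vertices avoid $N[v]$; when $\deg(v)=2$ take $x_2,x_3$ to be the two neighbours of $v$ (leaving $v$ undominated), otherwise extract a $2$-vertex cut of $B$ made of non-cut vertices separating $v$ from the attachment vertices, so $G-\{x_2,x_3\}$ is disconnected. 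If $G[X]$ is disconnected, use two cut vertices of a common non-bridge block together with one of their common non-cut neighbours. I expect the real difficulty to be making this last step uniform over all block shapes --- in particular over blocks such as $K_{2,n}$ attached along the small side, where every $2$-cut meets $X$, so that instead of two singletons one must peel off the two attachment vertices of the offending block as one part together with a single carefully chosen common neighbour of them. Finally, Theorem~\ref{tree} drops out: in a tree with no full vertex the cut vertices are exactly the non-leaves, which induce a connected subtree dominating all leaves, so $X$ is a connected dominating set and $CC(T)=2$.
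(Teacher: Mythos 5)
Your sufficiency direction is correct and is essentially the paper's argument: the observation that every cut vertex lies in every connected dominating set is the paper's Lemma~\ref{cut-vertex}, your deduction that all of $X$ ends up in a single part of any partition with $\ge 3$ parts is Lemma~\ref{cut-vertices}, and the final contradiction (a part containing a connected dominating set is itself a connected dominating set) is the same. The $\kappa(G)\ge 2$ subcase of the necessity direction also checks out.

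The genuine gap is in the main case of the necessity direction ($\kappa(G)=1$, $X\neq\emptyset$, $X$ not a connected dominating set). Your strategy is to find two non-cut vertices $x_2,x_3$ such that $V\setminus\{x_2,x_3\}$ is not a connected dominating set, but such a pair need not exist, and the failure is broader than the $K_{2,n}$ situation you flag. Concretely, let $G$ be the $3$-cube $Q_3$ with a pendant vertex $p$ attached at a vertex $c$. Then $X=\{c\}$ is not dominating (the antipode $v$ of $c$ has $N[v]\cap X=\emptyset$), so the theorem demands $CC(G)\ge 3$; but $Q_3$ is $3$-connected with minimum degree $3$, so for \emph{every} pair of non-cut vertices $x_2,x_3$ the set $V\setminus\{x_2,x_3\}$ induces a connected subgraph and dominates, i.e.\ your partition is never valid. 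Your fallback for $\deg(v)\ge 3$ --- extracting a $2$-vertex cut of the block separating $v$ from its attachment vertices --- presupposes the block has a $2$-cut at all, which $Q_3$ does not, and the proposed repair ("peel off the two attachment vertices ... with a carefully chosen common neighbour") is not carried out and does not obviously apply here. The paper avoids all of this block-by-block analysis with one uniform construction: take a \emph{minimal} connected dominating set $Y$ (it automatically contains $X$, and $Y\setminus X\neq\emptyset$ because $X$ is not a connected dominating set), pick $v\in Y\setminus X$, and use $\{Y\setminus\{v\},\{v\},V(G)\setminus Y\}$; minimality kills $Y\setminus\{v\}$ as a connected dominating set, $v\notin X$ makes $V(G)\setminus\{v\}$ one, and if $V(G)\setminus Y$ happens to be a connected dominating set one refines it greedily (the paper's Observation~\ref{dominating-set}) to get even more parts. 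You would need either to adopt such a construction or to supply a complete argument covering $3$-connected blocks before your proof of the necessity direction can be considered complete.
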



The \emph{corona product} of two graphs $G$ and $H$, denoted by $G\circ H$, is defined as the graph obtained by taking one copy of $G$ and $|V(G)|$ copies of $H$ and joining the $i$-th vertex of $G$ to every vertex of the $i$-th copy of $H$. Alikhani et al. \cite{Alikhani} determined the connected coalition number of $G\circ K_{1}$ for any connected graph $G$.

\begin{theorem}\emph{\cite[theorem 15]{Alikhani}}\label{corona}
$CC(G\circ K_{1})=2$ for any connected graph $G$.
\end{theorem}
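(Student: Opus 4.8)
The plan is to deduce Theorem~\ref{corona} from Theorem~\ref{cut-set}. Write $H=G\circ K_1$, so that $V(H)=V(G)\cup\{u_v:v\in V(G)\}$, where $u_v$ denotes the pendant vertex attached to $v$, and set $n=|V(G)|$. First I would dispose of the trivial case $n=1$: then $H=K_2$, and $CC(K_2)=2$ since $K_2$ is complete (the bound $CC\le |V|$ is attained by complete graphs, as noted in the introduction). So assume $n\ge 2$. Then $H$ is connected, and $H$ has no full vertex, since $\deg_H(v)=\deg_G(v)+1\le n$ and $\deg_H(u_v)=1$, while $|V(H)|-1=2n-1>n$; hence Theorem~\ref{cut-set} applies to $H$.

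It then remains to determine the set $X$ of cut vertices of $H$ and to verify that it is a connected dominating set. The pendants $u_v$ are leaves, so no $u_v$ is a cut vertex; conversely, deleting a vertex $v\in V(G)$ isolates $u_v$ and leaves a nonempty remainder (here $n\ge 2$ is used), so every $v\in V(G)$ is a cut vertex. Thus $X=V(G)$. Now $H[X]=H[V(G)]=G$, which is connected because $G$ is, and $X$ dominates $H$ because each pendant $u_v$ is adjacent to $v\in X$ and every vertex of $V(G)$ lies in $X$. Therefore $X$ is a connected dominating set of $H$, and Theorem~\ref{cut-set} yields $CC(H)=2$.

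There is no real obstacle once Theorem~\ref{cut-set} is available; the only points needing attention are the degenerate graph $K_1$, where the hypothesis of Theorem~\ref{cut-set} fails, and the easy identification of $V(G)$ as the cut set of $G\circ K_1$. Should a self-contained proof be preferred, it would run parallel to that of Theorem~\ref{cut-set}: the bound $CC(H)\ge 2$ is Theorem~\ref{connected}, and for $CC(H)\le 2$ one notes that every connected dominating set of $H$ contains all of $V(G)$ — a pendant $u_v$ can only be dominated by $u_v$ or $v$, and any connected set of size at least two containing $u_v$ must contain its unique neighbour $v$ — so in a hypothetical connected coalition partition with at least three parts some part misses $V(G)$ entirely, forcing its coalition partner to contain the connected dominating set $V(G)$ and hence to be a connected dominating set itself, which is impossible for a part lying in a coalition.
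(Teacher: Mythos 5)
Your proof is correct and follows the same route the paper itself takes: it derives the result from Theorem~\ref{cut-set} by identifying the cut-vertex set of $G\circ K_{1}$ as $V(G)$ and checking it is a connected dominating set, exactly as in the paper's proof of the more general Corollary~\ref{corona2}. Your handling of the degenerate case $G=K_{1}$ (where $G\circ K_{1}=K_{2}$ has full vertices and Theorem~\ref{cut-set} does not apply) is a worthwhile point of care that the paper also addresses via Observation~\ref{full-vertex}.
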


Alikhani et al. \cite{Alikhani} posed the following question.

\begin{question}
What is the connected coalition number of the corona product, the join, the Cartesian product and the lexicographical product of two graphs?
\end{question}

By Theorem \ref{cut-set}, we obtain the connected coalition number of the corona product of two graphs,  which generalizes Theorem \ref{corona}.

\begin{corollary}\label{corona2}
Let $G$ be a connected graph. Then for any graph $H$, we have
\begin{equation*}
CC(G\circ H)=\left\{\begin{array}{ll}
2, &\text{if $|V(G)|\geq 2$},\\
0,&\text{if $|V(G)|=1$ and $CC(H)=0$},\\
1+CC(H),&\text{if $|V(G)|=1$ and $CC(H)\neq 0$}.
\end{array}\right.
\end{equation*}
\end{corollary}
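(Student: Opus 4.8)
The plan is to split along the three cases of the statement, using Theorem~\ref{cut-set} for the first case and the characterization $CC(G)=0\iff G\in\mathcal F$ (recalled in the introduction) together with a direct counting argument for the remaining two.

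For $|V(G)|\ge 2$ (we may assume $V(H)\neq\emptyset$, since otherwise $G\circ H=G$), I would first observe that $G\circ H$ is connected and has no full vertex: the $i$-th vertex $g_i$ of the $G$-copy misses every $H$-copy but the $i$-th one, and a vertex of an $H$-copy misses every vertex of the $G$-copy but one, so neither type is adjacent to all other vertices. This lets me invoke Theorem~\ref{cut-set}, so it suffices to compute $X=\{v : (G\circ H)-v\text{ is disconnected}\}$ and check that $X$ is a connected dominating set. I claim $X$ is exactly the $G$-copy $V(G)$: deleting $g_i$ disconnects the nonempty $i$-th copy of $H$ from the rest, while deleting a vertex $w$ of an $H$-copy leaves a connected graph, since the remaining vertices of that copy are still attached to $g_i$ and $G$ stays connected. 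Finally $V(G)$ induces the connected graph $G$ and dominates every vertex of every $H$-copy, so $X$ is a connected dominating set and Theorem~\ref{cut-set} gives $CC(G\circ H)=2$.

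For $|V(G)|=1$, write $g$ for the single vertex of the $G$-copy, so $G\circ H$ is $H$ together with the new vertex $g$, which is full and adds no edge inside $V(H)$. The key elementary fact I would record and reuse is: for $B\subseteq V(H)$, $B$ is a connected dominating set of $G\circ H$ if and only if $B$ is a connected dominating set of $H$ (adding $g$ changes neither the subgraph induced on $B$ nor the domination of $V(H)$, and $g$ itself is always dominated when $B\neq\emptyset$). In the subcase $CC(H)=0$, the full vertices of $G\circ H$ are $g$ together with the full vertices of $H$, so deleting all full vertices of $G\circ H$ yields exactly $H$ minus its full vertices, which is disconnected because $H\in\mathcal F$; hence $G\circ H\in\mathcal F$ and $CC(G\circ H)=0$.

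For the subcase $CC(H)\neq 0$ I would prove the two inequalities separately. For $CC(G\circ H)\ge 1+CC(H)$, take a $CC(H)$-partition $\Phi$ and show $\Phi\cup\{\{g\}\}$ is a connected coalition partition of $G\circ H$: $\{g\}$ is a singleton full-vertex class, a singleton full-vertex class of $H$ remains one in $G\circ H$, and if $A_i,A_j$ form a connected coalition in $H$ then by the equivalence above they still form one in $G\circ H$. For $CC(G\circ H)\le 1+CC(H)$, take a $CC(G\circ H)$-partition $\Psi$ and let $B_t$ be the class containing $g$; since any set containing the full vertex $g$ is automatically a connected dominating set, $B_t$ must be the singleton $\{g\}$, and since $\{g\}$ is itself a connected dominating set it cannot be a coalition partner of any class, so each remaining class of $\Psi$ is either a singleton full vertex of $H$ or forms a connected coalition with another class contained in $V(H)$; the equivalence then makes $\Psi\setminus\{\{g\}\}$ a connected coalition partition of $H$, so $CC(G\circ H)-1\le CC(H)$. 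I expect this last step to be the main obstacle: one must verify carefully that $g$ occupies a class by itself, that no class can pair with $\{g\}$ to form a coalition, and that "connected dominating set" is unaffected inside $V(H)$ whether computed in $H$ or in $G\circ H$ — all three resting on $g$ being full and contributing no edge within $V(H)$.
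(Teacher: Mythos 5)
Your proof is correct and follows essentially the same route as the paper: for $|V(G)|\ge 2$ you invoke Theorem~\ref{cut-set} after checking that $X=V(G)$ is a connected dominating set of $G\circ H$, exactly as the paper does (you just supply the details the paper labels ``obviously''). For $|V(G)|=1$ the paper simply cites Observation~\ref{full-vertex}, whereas you re-derive that observation in this special case via the $\mathcal{F}$-characterization and the two inequalities; this is the same idea with the black box opened, and your verification that the class containing the full vertex $g$ must be the singleton $\{g\}$ and cannot serve as a coalition partner is sound.
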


The \emph{join} of two graphs $G$ and $H$, denoted by $G\vee H$, is defined as the graph formed by connecting every vertex of $G$ and every vertex of $H$ from disjoint copies $G$ and $H$.

\begin{theorem}\label{join}
Let $G$ and $H$ be two graphs. Then
\begin{equation*}
CC(G\vee H)=\left\{\begin{array}{ll}
|V(G)|+|V(H)|, &\text{if neither $G$ nor $H$ are complete graphs},\\
&\text{if one of $G$ and $H$ is a complete graph }\\
0,& \text{and another has connected coalition}\\
&\text{number $0$},\\
CC(G)+CC(H),&\text{others}.
\end{array}\right.
\end{equation*}
\end{theorem}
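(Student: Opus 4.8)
The plan is to analyze $G \vee H$ according to the structure of its full vertices, since the behavior of $CC$ hinges on whether removing all full vertices leaves a connected graph (recall the characterizations of $CC = 0$ via the family $\mathcal{F}$, and the lower bound $CC \geq 2$ of Theorem~\ref{connected}). First I would record the elementary observations about full vertices in a join: a vertex $v \in V(G)$ is full in $G \vee H$ if and only if $v$ is full in $G$, and symmetrically for $H$; in particular $G \vee H$ has a full vertex precisely when $G$ or $H$ is a complete graph (a vertex of $K_n$ being full), and $G \vee H$ itself is complete iff both $G$ and $H$ are complete. I would also note that $G \vee H$ is always connected (every vertex of one side dominates the entire other side), and that a set $S \subseteq V(G\vee H)$ is a \emph{connected} dominating set of $G \vee H$ as soon as $S$ meets both $V(G)$ and $V(H)$, because then $G\vee H[S]$ is connected and $S$ dominates everything; conversely a connected dominating set contained entirely in $V(G)$ must be a connected dominating set of $G$ (and similarly for $H$). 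These remarks reduce every "connected coalition" test to a purely combinatorial condition about which side(s) a block touches.

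Next I would treat the three cases in turn. \textbf{Case 1: neither $G$ nor $H$ is complete.} Then $G \vee H$ has no full vertex and I claim every singleton partition $\Phi = \{\{x\} : x \in V(G)\cup V(H)\}$ is a connected coalition partition, giving $CC(G\vee H) = |V(G)| + |V(H)|$ (the trivial upper bound). Indeed, pick $u \in V(G)$; since $G$ is not complete, there is some $u' \in V(G)$ not forming a connected dominating set of $G$ by itself — in fact I can take any single vertex of $V(G)$, which is never a connected dominating set of $G\vee H$ since it misses a side — and pair $\{u\}$ with $\{w\}$ for any $w \in V(H)$: the union meets both sides, hence is a connected dominating set, while neither singleton is. The same works for vertices of $H$. \textbf{Case 2: exactly one of $G,H$ is complete — say $H = K_m$ — and the non-complete one, $G$, has $CC(G) = 0$.} Here the full vertices of $G \vee H$ are exactly $V(H)$, and removing them leaves $G$, which is disconnected precisely because $CC(G) = 0$ forces $G \in \mathcal{F}$ with $G$ having no full vertex, so $G$ itself is disconnected; thus $G \vee H \in \mathcal{F}$ and $CC(G \vee H) = 0$ by \cite[Theorem 10]{Alikhani}. (I should double-check the degenerate sub-case where $G$ has a full vertex but still $CC(G)=0$; but then that vertex is full in $G\vee H$ too, $H=K_m$ contributes full vertices, and one verifies $G\vee H\in\mathcal F$ all the same, or else $G\vee H$ is complete — handled below.) \textbf{Case 3: everything else.} This splits further: if $G \vee H$ is complete (both complete) then $CC(G\vee H) = n = CC(G) + CC(H)$ since $CC(K_n) = n$; if exactly one side, say $H$, is complete and $CC(G) \neq 0$, I must show $CC(G\vee H) = CC(G) + CC(H) = CC(G) + m$.

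For the heart of Case 3 with $H = K_m$ non-degenerate, I would build an optimal partition of $G \vee H$ by taking a $CC(G)$-partition $\Phi_G = \{A_1,\dots,A_k\}$ of $G$ together with all $m$ singletons of $V(H)$ (note $CC(K_m) = m$, realized by singletons). Every singleton $\{h\}$, $h \in V(H)$, is a full vertex of $G \vee H$, so those blocks are legitimate. For a block $A_i$: if $A_i$ together with $A_j$ is a connected coalition \emph{in $G$}, then in $G \vee H$ their union $A_i \cup A_j \subseteq V(G)$ meets only one side, so I must instead argue that $A_i$ still participates in a connected coalition in $G\vee H$ — this is where the argument is most delicate. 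The natural move: pair $A_i$ with one of the $H$-singletons $\{h\}$. Then $A_i \cup \{h\}$ meets both sides, hence is a connected dominating set of $G \vee H$; and $\{h\}$ alone is not a connected dominating set of $G\vee H$ (it misses $V(G)$, assuming $|V(G)|\geq 2$, which holds since $G$ is not complete... wait, $K_1$ is complete, so non-complete $G$ has $|V(G)|\geq 2$) — good; and $A_i$ alone is not a connected dominating set of $G\vee H$ because either $A_i \subsetneq V(G)$ (so it misses some vertex of the other side — no wait, it misses vertices of $V(H)$? no: any vertex of $V(G)$ dominates all of $V(H)$)... so I need $A_i$ to fail to be a connected dominating set of $G\vee H$, equivalently $A_i$ fails to be a connected dominating set of $G$ \emph{or} $A_i = V(G)$. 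Since $\Phi_G$ is a connected coalition partition of $G$ with $k \geq 2$ blocks, each $A_i$ is a proper nonempty subset that is not a connected dominating set of $G$ — unless $A_i$ is a full-vertex singleton of $G$. If $A_i = \{v\}$ with $v$ full in $G$, then $v$ is full in $G \vee H$ and the block is fine on its own. This shows $CC(G\vee H) \geq CC(G) + m$.

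For the reverse inequality $CC(G \vee H) \leq CC(G) + m$ in this sub-case, I expect the main obstacle. Take any connected coalition partition $\Psi$ of $G \vee H$. I would show the blocks of $\Psi$ that are entirely contained in $V(G)$, after possibly merging the (at most?) handful that touch $V(H)$, "project" to a connected coalition structure on $G$; more precisely, let $\Psi_G$ be the blocks contained in $V(G)$ and let $t$ be the number of blocks meeting $V(H)$. Since any block meeting $V(H)$ in a non-full-vertex way must be in a coalition, and since blocks of $\Psi_G$ that pair only with each other in $G\vee H$ must (because unions in $V(G)$ dominate $G\vee H$ iff they're connected dominating in... hmm, a subset of $V(G)$ is a connected dominating set of $G\vee H$ iff it's connected in $G$ and dominates $G$, i.e. iff it's a connected dominating set of $G$) give a genuine connected coalition in $G$. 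The counting: the $t$ blocks meeting $V(H)$ use up at least... here I'd want $t \leq m$, which holds because... actually $t$ could be large if many blocks each grab one vertex of $V(H)$; but $|V(H)| = m$ bounds the number of blocks that can contain a vertex of $V(H)$, so $t \leq m$. Then $\Psi_G$ (a partition of $V(G)$ into $|\Psi| - t$ blocks) is a connected coalition partition of $G$ once I check each block of $\Psi_G$ is either a full-vertex singleton of $G$ or forms a connected coalition with another block of $\Psi_G$ — the worry being a block $A \in \Psi_G$ whose only coalition partner in $G\vee H$ is a block $B$ meeting $V(H)$. I would handle this by arguing that in that situation $A \cup (B \cap V(G))$ or the union of $A$ with \emph{some} other block of $\Psi_G$ still works, or by directly bounding: at worst this costs a bounded loss that the $t \leq m$ slack absorbs. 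Assembling, $|\Psi| = |\Psi_G| + t \leq CC(G) + m$. The analogous but easier bookkeeping finishes the complete/complete sub-case, and I would close by collecting the three cases into the displayed formula.

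\medskip

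\noindent\emph{Remark on the expected difficulty.} The two genuinely non-routine points are (i) verifying, in Case 3, that a block $A_i$ of a $CC(G)$-partition that was matched \emph{within} $G$ can always be rematched to an $H$-singleton in $G\vee H$ (this needs the observation that a subset of $V(G)$ is a connected dominating set of $G\vee H$ iff it is one of $G$, plus $|V(G)| \geq 2$), and (ii) the reverse inequality, i.e. showing no connected coalition partition of $G \vee H$ can "cheat" by using more than $m$ blocks to cover $K_m$'s side or by creating cross-side coalitions that don't descend to $G$; the clean way is probably the projection-and-count argument sketched above, with the key structural lemma being that cross-side unions are automatically connected dominating, so cross-side blocks are almost never forced into coalitions and can be absorbed into the additive count.
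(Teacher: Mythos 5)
The central step of your Case 3 lower bound fails. When $H=K_m$, every $h\in V(H)$ is a full vertex of $G\vee H$, so $\{h\}$ is by itself a connected dominating set of $G\vee H$; by definition it therefore cannot be one member of a connected coalition, and your plan to rematch a block $A_i$ of a $CC(G)$-partition with an $H$-singleton produces no coalition at all. The rematching is also unnecessary: by your own (correct) remark that a subset of $V(G)$ is a connected dominating set of $G\vee H$ if and only if it is one of $G$, any pair $A_i,A_j$ forming a connected coalition in $G$ still forms one in $G\vee H$ --- the union dominates $V(H)$ for free and stays connected, while neither part becomes a connected dominating set. So the partition consisting of a $CC(G)$-partition of $G$ together with the $m$ singletons of $V(H)$ (the latter as full-vertex blocks) already yields $CC(G\vee H)\ge CC(G)+m$. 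A related slip occurs in Case 1: it is false that $G\vee H$ has no full vertex when neither factor is complete (a non-complete $G$ may still have a full vertex, and it stays full in $G\vee H$), and false that a singleton of $V(G)$ is ``never'' a connected dominating set of $G\vee H$ --- it is one exactly when that vertex is full in $G$. The all-singletons partition still works, but only after separating full vertices (which stand alone) from non-full ones (which pair across the join), which is exactly how the paper argues via the sets $V_1(G)$ and $V_1(H)$.

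The upper bound in Case 3 is not actually proved: your projection-and-count sketch leaves unresolved precisely the problematic case of a block contained in $V(G)$ whose only coalition partner meets $V(H)$, and ``at worst this costs a bounded loss that the $t\le m$ slack absorbs'' is not an argument. The paper sidesteps all of this by invoking Observation~\ref{full-vertex}: every vertex of $K_m$ is full in $G\vee K_m$ and $(G\vee K_m)-h=G\vee K_{m-1}$, so peeling off the full vertices one at a time gives $CC(G\vee K_m)=m+CC(G)$ when $CC(G)\ne 0$ and $CC(G\vee K_m)=0$ when $CC(G)=0$; this single induction also subsumes your Case 2 and the complete-complete subcase. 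As written, both directions of the key equality $CC(G\vee K_m)=CC(G)+m$ in your proposal contain gaps, one fatal (the invalid coalition with a full-vertex singleton) and one unfinished (the counting).
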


\begin{figure}
\centering
\includegraphics[width=0.6\textwidth]{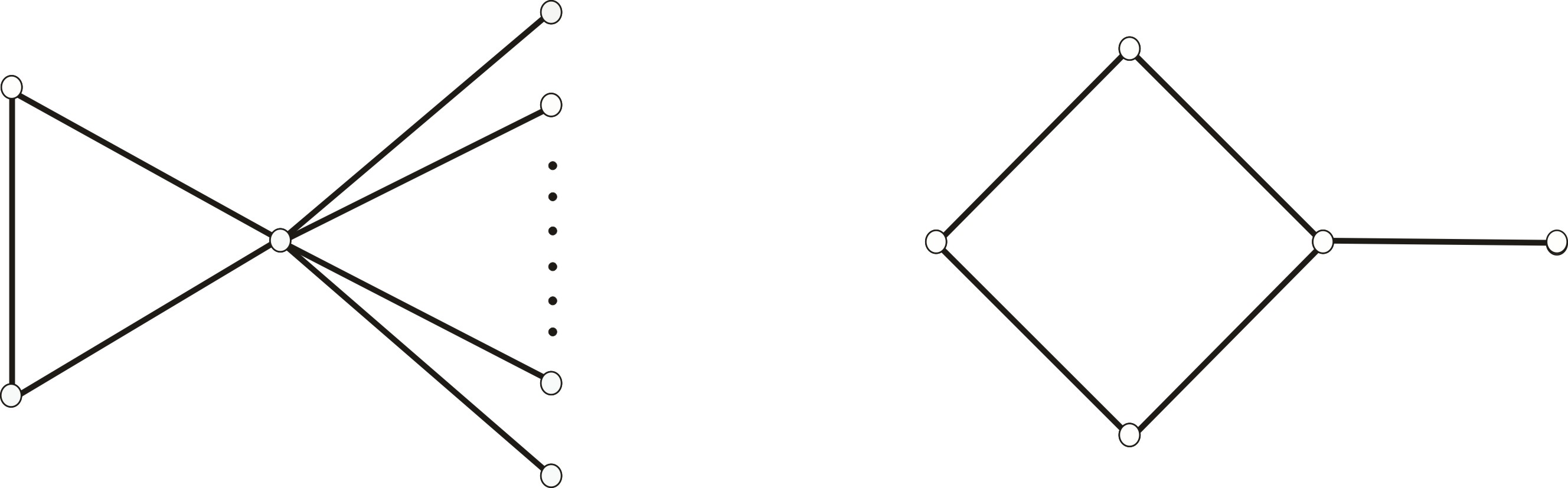}
\caption{(a) The family $\mathcal{G}$.~~~~~~~~~~~~~~~~~~~~(b) $C_{4}+e$.~~~~~~~~~~~~~~~~~}
\label{K3}
\end{figure}

We study the connected coalition number of unicycle graphs in Section 3. A family $\mathcal{G}$ of graphs is constructed as follows: the graphs obtained by identifying a vertex of $K_{3}$ and the full vertex of star graphs,  see Figure~\ref{K3} (a).

\begin{theorem}\label{unicycle graph}
Let $G$ be an unicycle graph of order $n$ with the cycle $C_{m}$, and let $Y=\{v\in V(C_{m})\mid G-v \ \text{is connected~}\}$. Then
\begin{equation*}
CC(G)=\left\{\begin{array}{ll}
4, &\text{if $G=C_{4}$},\\
0,&\text{if $G\in \mathcal{G}$},\\
2, &\text{if $n\geq 5$ and $|Y|\leq 1$ or $G[Y]=K_{2}$},\\
3,&\text{others}.
\end{array}\right.
\end{equation*}
\end{theorem}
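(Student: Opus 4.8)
The plan is to analyze a unicycle graph $G$ by separating it into the cycle $C_m$ and the trees hanging off its vertices, and to exploit Theorem~\ref{cut-set} as the main engine, since a unicycle graph with $n\ge 5$ and no full vertex falls into exactly the regime where $CC(G)\in\{2,3\}$ unless it is an exceptional small case. First I would dispose of the easy boundary cases: if $G=C_3$ then $G\in\mathcal G$ (it is $K_3$ with empty stars), and one checks directly that removing all full vertices — there are none — leaves $C_3$ connected, so $CC(C_3)\ge 2$; but actually $C_3=K_3$ has three full vertices, so $CC(C_3)=3$, and I must be careful that $\mathcal G$ as drawn excludes $C_3$ itself or treats it separately. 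For $G=C_4$ one verifies $CC(C_4)=4$ by exhibiting the partition into four singletons: any two of them form a connected dominating set (a path on two adjacent vertices dominates $C_4$) while no single vertex does. For $G=C_m$ with $m\ge 5$, the set $Y=V(C_m)$ and $G[Y]=C_m$ is connected and dominating, so by Theorem~\ref{cut-set} we need $X=\emptyset$ (true for a cycle, every vertex is a non-cutvertex) — wait, Theorem~\ref{cut-set} says $CC(G)=2$ iff $X$ is a connected dominating set, and for $C_m$ with $m\ge5$ the cutvertex set $X$ is empty, hence not dominating, so $CC(C_m)>2$; combined with the general bound this forces $CC(C_m)=3$, landing in the "others" case since $|Y|=m\ge 3$ and $G[Y]$ is not $K_2$. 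Good, consistent.

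The core of the argument is the generic case $n\ge5$, $G\notin\mathcal G$, $G\neq C_4$. Here $G$ has no full vertex (a unicycle graph of order $\ge5$ cannot have a full vertex unless it is $K_3$ with one pendant — but that has order $4$ — so this is automatic, though I should state it), so Theorem~\ref{connected} gives $CC(G)\ge2$. I would then show $CC(G)\le 3$: any connected dominating set of $G$ must contain all cutvertices, and in a unicycle graph the cutvertices together with the structure of the single cycle leave very little room — concretely, I would argue that in any connected coalition partition, at most three parts can be "active" because the connected dominating sets of $G$ are highly constrained (each is obtained from the spanning structure by trimming leaves, and two such sets covering everything forces a small number of parts). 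The cleanest route is: suppose $\Phi$ is a $CC(G)$-partition with $|\Phi|\ge4$; pick any part $A_i$ and its coalition partner $A_j$; then $V(G)\setminus(A_i\cup A_j)$ is split among $\ge2$ further parts, each of which must also find a partner, and a counting/adjacency argument on the unique cycle derives a contradiction unless $G$ is $C_4$. So $CC(G)\in\{2,3\}$, and now Theorem~\ref{cut-set} decides which: $CC(G)=2$ iff $X$ (the cutvertex set) is a connected dominating set. I would translate the condition "$X$ is a connected dominating set" into the stated condition on $Y=V(C_m)\setminus X$ restricted to the cycle: a vertex of $C_m$ is a cutvertex iff it is either on the cycle with a nontrivial tree attached or... no — a cycle vertex $v$ is a non-cutvertex of $G$ precisely when $G-v$ is still connected, which happens iff $v$ carries no pendant tree AND its two cycle-neighbours... actually $G-v$ connected iff $v$ has no attached tree of its own (removing $v$ breaks only the cycle into a path, still connected). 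So $Y$ is exactly the set of cycle-vertices with no hanging tree. Then $X\supseteq V(C_m)\setminus Y$ plus all tree-cutvertices, and $X$ dominates and is connected iff essentially $Y$ is small or isolated-ish: $|Y|\le1$ means at most one cycle vertex is "bare", so $X$ contains an arc covering $\ge m-1$ cycle vertices and all the trees — connected and dominating; if $G[Y]=K_2$, two adjacent bare vertices, then $X$ is $V(C_m)$ minus an edge plus all trees, still connected (it's a path on the cycle) and dominating. Conversely if $|Y|\ge2$ with the two bare vertices non-adjacent, or $|Y|\ge3$, then $X$ either disconnects or fails to dominate some bare cycle vertex whose cycle-neighbour is also bare-and-not-in-$X$, giving $CC(G)=3$.

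The main obstacle, I expect, is getting the upper bound $CC(G)\le3$ clean and making the case analysis for "$X$ is a connected dominating set $\iff$ $|Y|\le1$ or $G[Y]=K_2$" airtight, including the subtle interplay with trees attached to the cycle and the degenerate situation where the only bare cycle vertices happen to be adjacent versus spread out; also pinning down exactly which graphs lie in $\mathcal G$ (these are the ones where, after deleting full vertices — which here would be the apex of $K_3$ only when it is full, i.e. all other cycle vertices have degree $2$ and the stars are attached only at that apex — the remainder disconnects, forcing $CC=0$) requires care, since $\mathcal G$ is defined pictorially. A secondary subtlety is the $C_4$ exception: I would double-check that $C_4+e$ (shown in Figure~\ref{K3}(b)) is handled correctly — it has $n=4<5$ so it is not covered by the $n\ge5$ clauses, and it is not $C_4$, not in $\mathcal G$, so it must be separately computed (one finds $CC(C_4+e)=3$ or similar), which suggests the theorem implicitly assumes $n\ge5$ outside the two named exceptions; I would make that hypothesis explicit or verify all unicycle graphs on $\le4$ vertices by hand.
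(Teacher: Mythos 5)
Your overall architecture matches the paper's: dispose of the small cases and $\mathcal G$ directly, use Theorem~\ref{cut-set} to characterize when $CC(G)=2$ via the cut-vertex set $X$, and translate ``$X$ is a connected dominating set'' into the stated condition on $Y$ (your analysis of that translation -- a cycle vertex is a non-cut-vertex iff it carries no attached tree, and $G[X]$ stays connected iff at most one bare cycle vertex or two adjacent ones are removed from the cycle -- is essentially the paper's and is sound). The lower bound $CC(G)\geq 3$ in the ``others'' case also follows correctly from your route (Theorem~\ref{connected} plus the negation in Theorem~\ref{cut-set}), whereas the paper instead exhibits an explicit $3$-partition; both work.

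However, there is a genuine gap: you never actually prove the upper bound $CC(G)\leq 3$, which is the main technical content of this theorem. Your sketch (``pick a part and its partner, the remaining parts must find partners, and a counting/adjacency argument on the unique cycle derives a contradiction'') is not an argument, and you concede it is ``the main obstacle.'' The same gap already appears for pure cycles: you assert $CC(C_m)=3$ for $m\geq 5$ ``combined with the general bound,'' but no cited result gives $CC(C_m)\leq 3$; Theorem~\ref{cut-set} only separates $=2$ from $\geq 3$. The paper supplies the missing idea with two lemmas you do not have. First, Lemma~\ref{cycle} proves $CC(C_m)\leq 3$ for $m\geq 5$ by a structural analysis of connected dominating sets of a cycle (each induces $C_m$, $P_{m-1}$ or $P_{m-2}$). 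Second, and more importantly, Lemma~\ref{degree 1} shows that attaching a pendant vertex to a connected graph $H$ of order $\geq 3$ with no full vertex cannot increase the connected coalition number, i.e.\ $CC(G)\leq CC(H)$; iterating this strips all trees off the unicyclic graph and reduces the upper bound to $CC(C_m)=3$ when $m\geq 5$, or to $CC(C_4+e)=3$ when $m=4$ (one must stop at $C_4+e$ rather than $C_4$, since $CC(C_4)=4$ would give a useless bound -- a subtlety your proposal does not address in this context). Without Lemma~\ref{degree 1} or a substitute for it, the case $CC(G)=3$ versus $CC(G)\geq 4$ is not settled and the theorem is not proved.
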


Further, in Section 4 of this paper, we provide a lower bound for the connected coalition number of the Cartesian product and the lexicographical product of two graphs.

\section{Proofs of Theorems \ref{tree}, \ref{cut-set}, \ref{join} and Corollary \ref{corona2}}
\noindent

In this section, we give proofs of Theorems \ref{cut-set} and \ref{join}. Moreover, we give a proof of Corollary \ref{corona2} and provide a brief proof of Theorem \ref{tree} by using Theorem \ref{cut-set}.

Let $G$ be a graph of order $n$ with $CC(G)\geq 1$. If $deg(v)=n-1$ for some vertex $v\in V(G)$, then $\{v\}\in \Phi$ for any $CC(G)$-partition $\Phi$. We begin our proof with the following observation.

\begin{observation}\label{full-vertex}
Let $G$ be a connected graph with a full vertex $v$,  and let $H=G-v$. Then
\begin{equation*}
CC(G)=\left\{\begin{array}{ll}
0,&\text{if $CC(H)=0$},\\
1+CC(H),&\text{if $CC(H)\neq 0$}.
\end{array}\right.
\end{equation*}
\end{observation}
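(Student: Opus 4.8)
The plan is to exploit the fact that a full vertex $v$ of $G$ dominates all of $V(G)$ by itself and that $G[\{v\} \cup S]$ is connected for \emph{every} $S \subseteq V(G)\setminus\{v\}$ (since $v$ is adjacent to everything), so that adding $v$ to any set is ``free'' both for domination and for connectivity. I would split into the two cases exactly as in the statement. First suppose $CC(H) = 0$; I want $CC(G) = 0$, i.e.\ $G$ admits no connected coalition partition. Recall from the cited result of Alikhani et al.\ that $CC(H) = 0$ means $H \in \mathcal{F}$, that is, the graph obtained from $H$ by deleting all its full vertices is disconnected. The full vertices of $G$ are precisely $v$ together with the full vertices of $H$ (a vertex $u \neq v$ is full in $G$ iff it is full in $H$, since $v$ is already adjacent to $u$), so deleting all full vertices of $G$ yields the same graph as deleting all full vertices of $H$, which is disconnected. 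Hence $G \in \mathcal{F}$ and $CC(G) = 0$.

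Now suppose $CC(H) \neq 0$. For the lower bound $CC(G) \geq 1 + CC(H)$: take a $CC(H)$-partition $\Psi = \{B_1,\dots,B_k\}$ with $k = CC(H)$, and set $\Phi = \{\{v\}\} \cup \Psi$, a partition of $V(G)$ into $k+1$ sets. I must check $\Phi$ is a connected coalition partition of $G$. The singleton $\{v\}$ is fine because $v$ is a full vertex of $G$. For each $B_i$: in $H$ it forms a connected coalition with some $B_j$, so $B_i \cup B_j$ is a connected dominating set of $H$ but neither $B_i$ nor $B_j$ is; since $v$ has a neighbour in $B_i \cup B_j$ (indeed in every nonempty set) and $G[(B_i\cup B_j)\cup\{?\}]$ — here I just note $B_i\cup B_j$ already dominates $H$ hence together with $v$'s self-adjacency it dominates $G$, and it induces a connected subgraph of $G$ because it did in $H$ — so $B_i\cup B_j$ remains a connected dominating set of $G$; and neither $B_i$ nor $B_j$ is a connected dominating set of $G$ because it was not even one of $H$ (domination of $v$ is automatic but domination of all of $V(H)$ already failed, or connectivity failed, in $H$). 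Thus $B_i$ and $B_j$ still form a connected coalition in $G$, and $\Phi$ is valid.

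For the upper bound $CC(G) \leq 1 + CC(H)$: let $\Phi$ be any $CC(G)$-partition. Since $v$ is a full vertex of $G$, $\{v\} \in \Phi$; write $\Phi = \{\{v\}\} \cup \Psi$ where $\Psi$ is a partition of $V(H)$ into $|\Phi| - 1$ parts. I claim $\Psi$ is a connected coalition partition of $H$, which gives $|\Phi| - 1 \leq CC(H)$. The point is the reverse of the lower-bound argument: if $A, A' \in \Psi$ form a connected coalition in $G$, then $A \cup A'$ is a connected dominating set of $G$; removing $v$ (the only vertex of $G$ not in $H$, and a vertex \emph{not} in $A \cup A'$ unless... but $v \notin A\cup A'$ since $\{v\}$ is its own part), we see $A \cup A'$ dominates $V(H)$ and induces a connected subgraph, so it is a connected dominating set of $H$; and neither $A$ nor $A'$ is a connected dominating set of $H$, because if, say, $A$ were, then $A$ would also dominate $v$ and induce a connected subgraph in $G$, making $A$ a connected dominating set of $G$, contradicting that $A, A'$ form a coalition in $G$. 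One subtlety: a part $A \in \Psi$ might be a singleton $\{u\}$ with $u$ full in $G$; but then $u$ is full in $H$ as well, so $A = \{u\}$ is a legitimate part of a connected coalition partition of $H$ for that reason. Combining the two bounds gives equality.

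The main obstacle is the bookkeeping in the equivalence ``$X$ is a connected dominating set of $G$ iff it is one of $H$'' in both directions, specifically making sure that ``neither part is a connected dominating set'' transfers correctly: one must argue that a set failing to dominate $V(H)$ in $H$ still fails to connected-dominate $G$ (easy, since $V(H)\subseteq V(G)$) and, conversely, that a set which is a connected dominating set of $H$ becomes one of $G$ after the full vertex is re-attached (also easy, but it is the step that makes the coalition property non-monotone and so deserves care). There is nothing deep here, but the argument must be stated cleanly because it is the template reused for Corollary \ref{corona2} and Theorem \ref{join}.
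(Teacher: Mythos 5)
The paper states this Observation without proof, so there is no official argument to compare against; your proof is correct and supplies exactly the routine reasoning the authors evidently intended: a full vertex must occupy a singleton part of any connected coalition partition (any set containing it would already be a connected dominating set), and adding or deleting that singleton part gives a correspondence between connected coalition partitions of $G$ and of $H$, with the $CC(H)=0$ case handled via the $\mathcal{F}$-characterization. The only nit is that in the lower-bound direction you should also note, as you already do for the upper bound, that a part of $\Psi$ which is a singleton full vertex of $H$ remains a singleton full vertex of $G$ (so it need not find a coalition partner); this is immediate and does not affect correctness.
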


Now, we give a proof of Theorem \ref{join} by Observation \ref{full-vertex}.

\textbf{Proof of Theorem \ref{join}:} Assume first that neither $G$ nor $H$ are complete graphs.  Let $\Phi$ be a partition of  $V(G\vee H)$ such that each vertex forms a set of $\Phi$. Further, we take $$V_{1}(G)=\{v\in V(G)\mid v \ \text{is not a full vertex in}\ G\}$$ and
$$V_{1}(H)=\{v\in V(H)\mid v \ \text{is not a full vertex in}\ H\}.$$
Then $V_{1}(G)\neq \emptyset$ and $V_{1}(H)\neq \emptyset$. It is easy to see that $\{v\}$ and $\{w\}$ forms a connected coalition of $G\vee H$ for any $v\in V_{1}(G)$ and $w\in V_{1}(H)$. Note that $u$ is a full vertex in $G\vee H$ for all $u\in V(G\vee H)\backslash  (V_{1}(G)\cup V_{1}(H))$. This implies that $\Phi$ is a connected coalition partition of $G\vee H$. Therefore, $CC(G\vee H)=|V(G)|+|V(H)|$.

Further, assume that there is at least one complete graph  in $G$ and $H$. Recall that the connected coalition number of a complete graph is the number of its vertex set. Therefore, the conclusion holds by Observation \ref{full-vertex}. This completes the proof.

Next, we focus on connected coalition partitions of graphs with cut vertices.

\begin{lemma}\label{cut-vertex}
Let $G$ be a graph and $\Phi$ be a $CC(G)$-partition of $G$. If $A\in \Phi$ and $B\in \Phi$ form a connected coalition of $G$, then $v\in A$ or $v\in B$ for every cut vertex $v$ of $G$.
\end{lemma}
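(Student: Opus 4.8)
The plan is a short proof by contradiction that exploits the connectivity of the dominating set $A\cup B$. Suppose, for contradiction, that some cut vertex $v$ of $G$ satisfies $v\notin A$ and $v\notin B$, and write $S=A\cup B$. Since $A$ and $B$ form a connected coalition of $G$, the set $S$ is a connected dominating set of $G$; in particular $G[S]$ is connected and every vertex of $V(G)\setminus S$ has a neighbour in $S$.

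First I would observe that, because $v\notin S$, the set $S$ is disjoint from $\{v\}$, so $G[S]$ is a connected subgraph of $G-v$. Hence all of $S$ is contained in a single connected component of $G-v$, say $C$. The only thing to check here is that $S\neq\emptyset$: this holds because $G$ has a cut vertex, so $|V(G)|\geq 3$, and a connected dominating set of such a graph cannot be empty.

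Next I would invoke that $v$ is a cut vertex: $G-v$ has a connected component $C'\neq C$. Pick any vertex $u\in V(C')$. Then $u\notin S$ since $S\subseteq V(C)$, and $u$ has no neighbour in $S$, because any such edge would be an edge of $G-v$ joining $C'$ to $C$, which is impossible. Thus $S$ fails to dominate $u$, contradicting that $S$ is a dominating set of $G$. Therefore $v\in A$ or $v\in B$, as claimed.

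The argument is essentially routine; the only point requiring a little care is the passage ``$G[S]$ connected and $v\notin S$ $\Rightarrow$ $S$ lies in one component of $G-v$'', together with the harmless non-emptiness remark. (Note that the hypothesis that $\Phi$ is a $CC(G)$-partition plays no role: the conclusion in fact holds for every connected coalition $(A,B)$ of $G$.)
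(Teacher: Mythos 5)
Your proof is correct and is essentially the paper's argument: the paper splits into the two cases ``$A\cup B$ lies in one component of $G-v$'' (domination fails) and ``it does not'' (connectedness of $G[A\cup B]$ fails), while you deduce the first case from connectivity and then derive the domination contradiction — the same two facts in a slightly different order. Your closing remark that the $CC(G)$-partition hypothesis is not needed is also accurate.
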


\begin{proof}
Suppose to the contrary that $v\notin A$ and $v\notin B$ for some cut vertex $v$ of $G$. Let $G_{1},G_{2},\ldots,G_{k}$ ($k\geq 2$) be the connected components of $G-v$. If there is a connected component $G_{i}$ with $1\leq i\leq k$ such that $A\cup B\subseteq V(G_{i})$, then the vertices in $\cup _{j\neq i}V(G_{j})$ are not dominated by $A\cup B$. This contradicts that $A$ and $B$ form a connected coalition of $G$. Otherwise, $G[A\cup B]$ is not connected, which again contradicts that $A$ and $B$ form a connected coalition of $G$. Therefore, $v\in A$ or $v\in B$. This completes the proof.
\end{proof}

\begin{lemma}\label{cut-vertices}
Let $G$ be a connected graph of $CC(G)\geq 3$ with no full vertex and let $\Phi$ be a $CC(G)$-partition of $G$. Then $v$ and $w$ belong to the same set in $\Phi$ for any two distinct cut vertices $v$ and $w$ of $G$.
\end{lemma}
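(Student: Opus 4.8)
The plan is to argue by contradiction: suppose $v$ and $w$ are distinct cut vertices lying in different parts of the $CC(G)$-partition $\Phi$. Since $CC(G)\geq 3$ and $G$ has no full vertex, every part of $\Phi$ participates in some connected coalition. First I would apply Lemma \ref{cut-vertex} to locate the cut vertices: if $A,B\in\Phi$ form a connected coalition then every cut vertex of $G$ lies in $A\cup B$. The idea is to pick a third part and two coalitions to force a contradiction. Say $v\in A_1$ and $w\in A_2$ with $A_1\neq A_2$. Take any part $A_3$ distinct from both (possible since $|\Phi|\geq 3$) and let $A_3$ together with some $A_j$ form a connected coalition. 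By Lemma \ref{cut-vertex}, both $v$ and $w$ must lie in $A_3\cup A_j$; since $v\in A_1$, $w\in A_2$, and $A_3$ contains neither (as $A_3\neq A_1, A_2$), we are forced to have $\{A_3,A_j\}$ covering $v$ and $w$ only through $A_j$, which is impossible unless $A_j$ contains both $v$ and $w$ — contradicting $v\in A_1$, $w\in A_2$ being different parts. Hence $A_3$'s coalition partner $A_j$ would have to be simultaneously $A_1$ and $A_2$, a contradiction.

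More carefully, the real work is this: for the part $A_3$ and its coalition partner $A_j$, Lemma \ref{cut-vertex} says $v\in A_3\cup A_j$ and $w\in A_3\cup A_j$. Since $v\in A_1$ and $A_1\neq A_3$, we need $A_j=A_1$; similarly since $w\in A_2$ and $A_2\neq A_3$, we need $A_j=A_2$. But $A_1\neq A_2$, so no single partner $A_j$ works, contradicting that $A_3$ forms a connected coalition with some part of $\Phi$. Since $G$ has no full vertex, $A_3$ is not a singleton full vertex, so it must have a coalition partner; this is where the no-full-vertex hypothesis is used. The only remaining subtlety is the edge case where $\Phi=\{A_1,A_2,A_3\}$ has exactly three parts and $A_3$'s partner must be $A_1$ or $A_2$ — but the argument above already covers this, since it shows neither choice is consistent.

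I expect the main (minor) obstacle to be bookkeeping the case distinctions cleanly: one must make sure that a third part $A_3$ genuinely exists and is distinct from the parts containing $v$ and $w$, which uses $CC(G)\geq 3$, and that $A_3$ is guaranteed a coalition partner, which uses that $G$ has no full vertex so no part of $\Phi$ is an isolated-singleton exception. Once these are in place the contradiction is immediate from two applications of Lemma \ref{cut-vertex}. No computation is needed; the argument is purely combinatorial and short.
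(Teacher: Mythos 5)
Your argument is correct and is essentially the paper's proof: both select a third part $A_3$ containing neither $v$ nor $w$ (using $CC(G)\geq 3$), note it must have a coalition partner $A_j$ because $G$ has no full vertex, and apply Lemma \ref{cut-vertex} to force both cut vertices into $A_j$. The only difference is that you phrase it as a contradiction while the paper concludes directly that $v,w\in A_j$; the substance is identical.
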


\begin{proof}
Since $G$ is a connected graph with no full vertex and $CC(G)\geq 3$, there is a set $A\in \Phi$ such that $v\notin A$ and $w\notin A$. Further, there is a set $B\in \Phi$ such that $A$ and $B$ form a connected coalition of $G$. Therefore, by Lemma~\ref{cut-vertex}, $v\in B$ and $w\in B$. This completes the proof.
\end{proof}

Finally, we give an observation that will be useful later.

\begin{observation}\label{dominating-set}
Let $G$ be a connected graph with no full vertex, and let $A\subseteq V(G)$ with $|A|\geq 2$ be a connected dominating set of $G$. Then there is a partition $\Phi=\{A_{1},A_{2},\ldots,A_{k}\}$ of $A$ such that for any $A_{i}\in \Phi$, $A_{i}$ and $A_{j}$ form a connected coalition of $G$ for some $A_{j}\in \Phi$.
\end{observation}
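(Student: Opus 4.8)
The plan is to build the partition greedily by repeatedly splitting off small pieces from $A$, checking that each split leaves a valid connected coalition partner behind. First I would handle the base of the induction: if $A$ itself can be written as $A_1 \cup A_2$ with both $A_i$ nonempty and neither $A_i$ a connected dominating set, we are done with $k=2$; the whole point is that this is always possible. Since $G$ has no full vertex, $A \ne V(G)$ is impossible only in trivial ways — actually $A$ may equal $V(G)$ — so the real input is just that $|A| \ge 2$ and $A$ is connected and dominating. I would pick any vertex $u \in A$ that is \emph{not} a cut vertex of the induced subgraph $G[A]$ (such a $u$ exists because every connected graph on $\ge 2$ vertices has at least two non-cut vertices). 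Then set $A_2 = \{u\}$ and $A_1 = A \setminus \{u\}$. By the choice of $u$, $G[A_1]$ is still connected. Now $A_1 \cup A_2 = A$ is a connected dominating set, $A_2 = \{u\}$ is not a connected dominating set of $G$ (a single vertex dominates $G$ only if it is full, and $G$ has no full vertex), and $A_1$ is not a connected dominating set of $G$ because it fails to dominate $u$ — wait, $u$ might be dominated by $A_1$. So I need a sharper choice.

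The fix is to choose $u$ more carefully so that $A_1 = A \setminus \{u\}$ fails to be a \emph{dominating} set, i.e. so that $u$ has a private neighbor with respect to $A$ that also lies outside $A$ — or, failing that, to argue directly. Concretely: if there exists a non-cut vertex $u$ of $G[A]$ such that $N[u] \cap (V(G) \setminus A) \ne \emptyset$ and $u$ is the unique vertex of $A$ dominating some vertex $x \notin A$, take that $u$; then $A_1$ misses $x$ and we are done. If no such $u$ exists — for instance when $A = V(G)$, where there is nothing outside $A$ — then I instead split $A$ into two connected pieces $A_1, A_2$ each of size $\ge 1$ by deleting an edge of a spanning tree of $G[A]$; each $A_i$ is a proper nonempty connected subset, and since $A = V(G)$ in this case, neither $A_i$ is all of $V(G)$, so neither dominates the vertex it omits — hence neither is a (connected) dominating set, while their union is. Either way I obtain a partition into exactly two connected coalition partners, which is already a valid $\Phi$. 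I do not actually need $k$ to be large; the statement only asks for the existence of \emph{some} such partition, so $k=2$ suffices and the "greedy refinement" idea can be dropped entirely.

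The only genuine obstacle is the degenerate case $A = V(G)$, where "fails to dominate something outside $A$" is vacuous; I handle it by the spanning-tree edge-deletion argument above, using that $G$ has no full vertex only indirectly (it guarantees $n \ge 2$ and that singletons are not dominating, though with $A = V(G)$ and $n \ge 2$ that is automatic). When $A \subsetneq V(G)$, I pick a vertex $x \in V(G) \setminus A$, let $u \in A$ be a neighbor of $x$ (exists since $A$ dominates), and if $u$ is a non-cut vertex of $G[A]$ set $A_2 = \{u\}$, $A_1 = A \setminus \{u\}$; if the chosen $u$ is a cut vertex of $G[A]$, move $x$ to a neighbor whose unique $A$-dominator is a non-cut vertex of $G[A]$, which one can always arrange by choosing $x$ adjacent to a leaf-block of $G[A]$. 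I then verify the three defining conditions of a connected coalition for the pair $(A_1, A_2)$: $G[A_1]$ connected (choice of $u$), $G[A]$ connected dominating (hypothesis), $A_1$ not dominating (misses $x$), $A_2$ not dominating (singleton, $G$ has no full vertex). This completes the construction with $\Phi = \{A_1, A_2\}$.
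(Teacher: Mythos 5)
There is a genuine gap: your central claim that $k=2$ always suffices is false. Take $G=K_{2,2,2}$ (the octahedron), which is connected with no full vertex, and $A=V(G)$. In any partition of $A$ into two parts, some part has at least three vertices and hence contains two adjacent vertices $u,v$ (the parts of the tripartition have size $2$); but then that part contains the connected dominating set $\{u,v\}$ and induces a connected subgraph (every vertex of $G$ is adjacent to $u$ or to $v$), so it is itself a connected dominating set. Thus no $2$-partition of $A$ into two non-connected-dominating sets exists, and the correct partition here needs $k=3$ (the three independent pairs) or more. Your supporting arguments also fail individually: in the case $A=V(G)$ the inference ``neither $A_i$ is all of $V(G)$, so neither dominates the vertex it omits'' is a non sequitur, since a proper subset can perfectly well dominate the vertices it omits (this is exactly what happens when you delete a spanning-tree edge of $C_4$); and in the case $A\subsetneq V(G)$ you need an external vertex whose unique dominator in $A$ is a non-cut vertex of $G[A]$, which need not exist (e.g.\ when every vertex outside $A$ has two or more neighbours in $A$, so that $A\setminus\{u\}$ remains dominating for every non-cut $u$).

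The paper's proof sidesteps all of this by first passing to a \emph{minimal} connected dominating set $X\subseteq A$: minimality guarantees that \emph{every} proper subset of $X$ fails to be a connected dominating set, so any bipartition of $X$ into two nonempty parts is automatically a connected coalition. The remaining vertices of $A\setminus X$ are then absorbed one at a time, either as new singleton parts (when the new vertex completes some existing part to a connected dominating set, giving that singleton a partner) or merged into the first part (when it does not, so the merged part is still not a connected dominating set and keeps its old partner). This is the mechanism that lets $k$ grow beyond $2$, which, as the octahedron shows, is sometimes unavoidable. If you want to salvage your write-up, replace the ``split $A$ into two pieces'' plan with this ``minimal core plus greedy absorption'' plan.
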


\begin{proof}
Let $X\subseteq A$ be a minimal connected dominating set of $G$, that is, $X'$ is not a connected dominating set of $G$ for any proper subset $X'\subseteq X$. Note that $|X|\geq 2$ due to no full vertex of $G$. Then $X_{1}$ and $X_{2}$ form a connected coalition of $G$ for any partition $\{X_{1},X_{2}\}$ of $X$, in which $|X_{1}|\geq1$ and $|X_{2}|\geq1$. If $X=A$, then we are done. Thus, we consider that $A\backslash X\neq \emptyset$.

Let $A\backslash X=\{x_{1},x_{2},\ldots ,x_{s}\}$ and $Y_{r}=X\cup \{x_{1},x_{2},\ldots ,x_{r}\}$ for any $r\leq s$. Clearly, if $r=0$, then $Y_{r}=X$. Assume that there is a partition $\Phi_{r}=\{A_{1},A_{2},\ldots,A_{t}\}$ of $Y_{r}$ such that for any $A_{i}\in \Phi_{r}$, $A_{i}$ and $A_{j}$ form a connected coalition of $G$ for some $A_{j}\in \Phi_{r}$. If $\{x_{r+1}\}\cup A_{i}$ is a connected dominating set of $G$ for some $i\in \{1,2,\ldots,t\}$, then let $\Phi_{r+1}=\{A_{1},A_{2},\ldots,A_{t},\{x_{r+1}\}\}$, otherwise let $\Phi_{r+1}=\{A_{1}\cup\{x_{r+1}\},A_{2},\ldots,A_{t}\}$. It is easy to see that $\Phi_{r+1}$ is a partition of $Y_{r+1}$ such that for any $A_{i}'\in \Phi_{r+1}$, $A_{i}'$ and $A_{j}'$ form a connected coalition of $G$ for some $A_{j}'\in \Phi_{r+1}$. Following this step for all vertices in $\{x_{1},x_{2},\ldots ,x_{s}\}$ until $r=s$, we can obtain a partition $\Phi=\{A_{1},A_{2},\ldots,A_{k}\}$ of $A$ such that for any $A_{i}\in \Phi$, $A_{i}$ and $A_{j}$ form a connected coalition of $G$ for some $A_{j}\in \Phi$. This completes the proof.
\end{proof}


\textbf{Proof of Theorem \ref{cut-set}:} We first prove the sufficiency. It is clear that $CC(G)\geq2$ by Theorem \ref{connected}. Assume that $CC(G)\geq3$. Let $\Phi$ be a $CC(G)$-partition of $G$. By Lemma~\ref{cut-vertices}, there is a set $A\in \Phi$ such that $X\subseteq A$. Then $A$ is a connected dominating set of $G$ since $X$ is a connected dominating set of $G$. This contradicts that $\Phi$ is a connected coalition partition of $G$. Hence, $CC(G)=2$.

Next, we prove the necessity. Suppose to the contrary that $X$ is not a connected dominating set of $G$. Let $Y$ be a minimal connected dominating set of $G$ with $X\subseteq Y$. Then $Y\backslash X\neq \emptyset$. If $V(G)\backslash Y$ is not a connected dominating set of $G$, then let $\Phi=\{Y\backslash \{v\}, \{v\}, V(G)\backslash Y\}$ for some $v\in Y\backslash X$. Since $v\notin X$, $(Y\backslash \{v\})\cup(V(G)\backslash Y)=V(G)\backslash\{v\}$ is a connected dominating set of $G$. This implies that $\Phi$ is a connected coalition partition of $G$. Therefore, $CC(G)\geq 3$, a contradiction. Assume that $V(G)\backslash Y$ is a connected dominating set of $G$. Since $G$ has no full vertex, $|V(G)\backslash Y|\geq 2$. By Observation \ref{dominating-set}, we know that there is a partition $\{Y_{1},Y_{2},\ldots,Y_{k}\}$ of $V(G)\backslash Y$ such that for any $Y_{i}$, $Y_{i}$ and $Y_{j}$ form a connected coalition of $G$ for some $Y_{j}\in \{Y_{1},Y_{2},\ldots,Y_{k}\}$. This implies that $\Phi=\{Y\backslash \{v\}, \{v\}, Y_{1},\ldots,Y_{k}\}$ is a connected coalition partition of $G$ for some $v\in Y$. Therefore, $CC(G)\geq 4$, again a contradiction. This proves Theorem~\ref{cut-set}.

We close this section with a brief proof of Theorem \ref{tree} and Corollary  \ref{corona2} by using Theorem \ref{cut-set}.

\textbf{Proof of Theorem \ref{tree}:} Let $X=\{v\in V(T)\mid T-v \ \text{is not connected}\}$, that is, $X$ contains all of vertices other than pendant vertices of $T$. Clearly, $X$ is a connected dominating set of $T$. Therefore, by Theorem \ref{cut-set}, we have $CC(T)=2$. This completes the proof.

\textbf{Proof of Corollary \ref{corona2}:} Assume that $V(G)=\{v\}$. Then $v$ is a full vertex of $G\circ H$. By Observation \ref{full-vertex}, $CC(G\circ H)=0$ if $CC(H)=0$ and $CC(G\circ H)=1+CC(H)$ if $CC(H)\neq0$.

We now need only to consider that $|V(G)|\geq 2$. It is easy to see that $G\circ H$ has no full vertex. Let $X=\{v\in V(G\circ H)\mid G\circ H-v \ \text{is not connected}\}$. Obviously, $X=V(G)$ and $X$ is a connected dominating set of $G\circ H$. Thus, $CC(G\circ H)=2$ by Theorem \ref{cut-set}. This completes the proof.

\section{Proof of Theorem \ref{unicycle graph}}
\noindent

In this section, we study the connected coalition number of unicycle graphs by proving Theorem \ref{unicycle graph}. We start with the connected coalition number of cycles.

\begin{lemma} \label{cycle}
For any cycle $C_{n}$ with order $n$, we have
\begin{equation*}
CC(C_{n})=\left\{\begin{array}{ll}
4, &\text{if $n=4$},\\
3,&\text{otherwise}.
\end{array}\right.
\end{equation*}
\end{lemma}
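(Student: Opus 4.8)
The plan is to treat three regimes separately: $n=3$, $n=4$, and $n\geq 5$. For $n=3$, $C_3=K_3$ is complete, so $CC(C_3)=3$ by the observation that the connected coalition number of a complete graph equals its order. For $n=4$, the order bound gives $CC(C_4)\leq 4$, and the partition of $V(C_4)$ into its four singletons witnesses $CC(C_4)=4$: a single vertex of $C_4$ fails to dominate its antipode, while any two adjacent vertices induce an edge, which is a connected dominating set of $C_4$, so consecutive singletons form connected coalitions. Note that Theorem~\ref{cut-set} alone does not settle the cycle, since $C_n$ has no cut vertex, so it only yields $CC(C_n)\neq 2$; the real work is the case $n\geq 5$.

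For $n\geq 5$ I would first record the structure of connected dominating sets. Since $C_n$ has no full vertex, any connected dominating set $S\neq V(C_n)$ induces a connected proper subgraph of the cycle, hence is a set of consecutive vertices (an \emph{arc}); and domination forces the complementary arc to have at most two vertices, since an arc of three or more non-chosen vertices would have an undominated interior vertex. Thus the connected dominating sets of $C_n$ are exactly $V(C_n)$ and the arcs of $n-1$ or $n-2$ consecutive vertices. For the lower bound $CC(C_n)\geq 3$, label the vertices $v_1,\ldots,v_n$ cyclically and take the partition $\{\{v_2,\ldots,v_{n-2}\},\{v_1\},\{v_{n-1},v_n\}\}$ (all parts nonempty because $n\geq 5$): the first two parts union to the arc $\{v_1,\ldots,v_{n-2}\}$ and the last two to the arc $\{v_2,\ldots,v_n\}$, both connected dominating sets, while none of the three parts is itself a connected dominating set, so each part lies in a connected coalition.

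For the upper bound $CC(C_n)\leq 3$, suppose a connected coalition partition $\Phi$ has $k\geq 4$ parts. Pick any part $A$ and a partner $B$; then $A\cup B$ is a connected dominating set, so $|V(C_n)\setminus(A\cup B)|\leq 2$, which leaves at most two further parts and hence $k\leq 4$; moreover $k=4$ forces $|A\cup B|=n-2$ with the two leftover vertices forming two singleton parts. After a cyclic relabelling take $A\cup B=\{v_1,\ldots,v_{n-2}\}$, so $\Phi=\{A,B,\{v_{n-1}\},\{v_n\}\}$. Now track the partners of the two singletons: $\{v_{n-1}\}$ cannot partner $\{v_n\}$ (their union has size $2<n-2$), so it partners one of $A,B$; connectivity of that union, together with $v_n\notin A\cup B$, pins down $A=\{v_2,\ldots,v_{n-2}\}$ and $B=\{v_1\}$. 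Then $\{v_n\}$ has no legal partner left, since $\{v_n\}\cup A$ is disconnected, $\{v_n\}\cup B=\{v_n,v_1\}$ has size $2$, and $\{v_n\}\cup\{v_{n-1}\}$ has size $2$ — a contradiction. Combining the two bounds gives $CC(C_n)=3$ for $n\geq 5$.

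The main obstacle is exactly this $k=4$ endgame: ruling out $k\geq 5$ is a one-line counting argument once the connected dominating sets are classified, but eliminating $k=4$ requires the careful bookkeeping of which parts can serve as partners for the two isolated singleton vertices, exploiting both the connectivity requirement and the size threshold $n-2$ for connected dominating sets.
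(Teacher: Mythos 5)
Your proof is correct and follows essentially the same route as the paper: an explicit three-part partition for the lower bound when $n\ge 5$, and an upper bound obtained by classifying the connected dominating sets of $C_n$ as $V(C_n)$ or arcs of $n-1$ or $n-2$ vertices and then eliminating the four-part configuration by showing one leftover singleton has no admissible partner. (Your case analysis at $k=4$ is in fact spelled out a bit more carefully than the paper's; the only blemish is the phrase ``the last two'' parts, where you clearly mean the first and third parts, whose union is the arc $\{v_2,\ldots,v_n\}$ you wrote.)
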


\begin{proof}
Let $C_{n}=v_{1}v_{2}\cdots v_{n}v_{1}$. It is easy to check that $CC(C_{3})=3$ and $CC(C_{4})=4$. Thus, we assume that $n\geq 5$. It is not hard to see that $\{\{v_{2}\},\{v_{n}\},V(C_{n})\backslash \{v_{2},v_{n}\}\}$ is a connected coalition partition of $G$. Hence, $CC(C_{n})\geq 3$. We now need only to prove that $CC(C_{n})\leq3$. Suppose to the contrary that $CC(C_{n})\geq 4$. Let $\Phi$ be a $CC(C_{n})$-partition of $C_{n}$.

Note that the subgraph induced by a connected dominating set of $C_{n}$ is either the cycle $C_{n}$ or a path $P_{n-1}$ or a path $P_{n-2}$. Since $CC(C_{n})\geq 4$, for any two sets in $\Phi$, say $A$ and $B$, we have $C_{n}[A\cup B]=P_{n-2}$. Without loss of generality, we assume that $P_{n-2}=v_{1}v_{2}\cdots v_{n-2}$. In this way, $\{v_{n-1}\}\in \Phi$ and $\{v_{n}\}\in \Phi$ since $CC(C_{n})\geq 4$. Note that $\{v_{n-1},v_{n}\}$ is not a dominating set of $C_{n}$. Then either $\{v_{n-1}\}\cup A$ or $\{v_{n-1}\}\cup B$ is a connected dominating set of $C_{n}$. This implies that $\Phi=\{\{v_{1}\},\{v_{2},v_{3},\ldots,v_{n-2}\},\{v_{n-1}\},\{v_{n}\}\}$. However, none of $\{v_{1},v_{n}\}$, $\{v_{2},v_{3},\ldots,v_{n-2},v_{n}\}$ and $\{v_{n-1},v_{n}\}$ is a connected dominating set of $C_{n}$, which contradicts that $\Phi$ is a connected coalition partition of $C_{n}$. Therefore, $CC(C_{n})\leq3$ and so $CC(C_{n})=3$. This completes the proof.
\end{proof}

We now discuss about the relation of the connected coalition number between graphs $G$ with pendant vertices $X$ and graphs $G-X$.

\begin{lemma}\label{degree 1}
Let $H$ be a connected graph of order $n\geq 3$ with no full vertex. If $G$ is a graph obtained by identifying a vertex of $H$ and a vertex of $K_{2}$, then $CC(G)\leq CC(H)$.
\end{lemma}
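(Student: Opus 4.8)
The plan is to start from a $CC(G)$-partition $\Phi$ and transform it into a connected coalition partition of $H$ of size at least $|\Phi|$. Let $u$ be the vertex of $H$ that is identified with a vertex of $K_2$, and let $x$ be the pendant vertex of $G$ adjacent only to $u$; so $V(G)=V(H)\cup\{x\}$ and $x$ has degree $1$ in $G$. First I would record the basic structural facts: $G$ has no full vertex (since $H$ has none and $x$ is pendant with $n\geq 3$), $x$ is a pendant vertex, and $u$ is a cut vertex of $G$. A crucial observation is that any connected dominating set $S$ of $G$ must contain $u$ (to dominate $x$, since $x$'s only neighbour is $u$, either $x\in S$ or $u\in S$; and if $x\in S$ then connectedness of $G[S]$ forces $u\in S$ as well, because $x$ has no other neighbour). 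Moreover $x$ lies in a connected dominating set $S$ of $G$ only if $u\in S$ too, and in that case $S\setminus\{x\}$ is still a connected dominating set of $G$ (removing the pendant $x$ cannot disconnect $G[S]$ nor lose domination of any vertex other than $x$, which is dominated by $u$).

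Next I would handle the set of $\Phi$ containing $x$, say $x\in A_1$. I claim $|A_1|\geq 2$: if $A_1=\{x\}$, then $\{x\}$ would have to form a connected coalition with some $A_j$, forcing $\{x\}\cup A_j$ to be a connected dominating set of $G$, which by the observation above contains $u$ and hence $(\{x\}\cup A_j)\setminus\{x\}=A_j$ is already a connected dominating set of $G$ — contradicting that $A_1,A_j$ is a coalition. So $A_1=\{x\}\cup A_1'$ with $A_1'\neq\emptyset$. Now form $\Phi'=\{A_1',A_2,\dots,A_k\}$, a partition of $V(H)$. The key step is to verify that $\Phi'$ is a connected coalition partition of $H$. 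For this I need: (i) replacing $A_1$ by $A_1'$ does not destroy any coalition that $A_1$ was part of — if $A_1\cup A_j$ was a connected dominating set of $G$, then it contains $u$, so $(A_1\cup A_j)\setminus\{x\}=A_1'\cup A_j$ is a connected dominating set of $G$, and since $x$ is a pendant vertex attached at $u$, a set $T\subseteq V(H)$ is a connected dominating set of $G$ iff it is a connected dominating set of $H$ containing $u$ — actually one checks $T$ dominates/connects in $G$ iff it does so in $H$ when $x\notin T$ and $u\in T$, and even without $u\in T$, since the only extra vertex and edge of $G$ over $H$ are $x$ and $xu$, a subset $T\subseteq V(H)$ dominates $G$ iff it dominates $H$ and ($u\in T$ or $x\in T$); with $x\notin T$ this is "dominates $H$ and $u\in T$," and $G[T]=H[T]$; and (ii) neither $A_1'$ nor $A_j$ alone is a connected dominating set of $H$ — if $A_1'$ were a connected dominating set of $H$, I'd need to rule out that it fails in $G$; but $A_1'$ dominates $H$ and if $u\in A_1'$ then $A_1'$ dominates $x$ too, so $A_1'$ would be a connected dominating set of $G$, contradicting that $A_1\supsetneq A_1'$ is the coalition-member; if $u\notin A_1'$, then $A_1\cup A_j=\{x\}\cup A_1'\cup A_j$ connected in $G$ forces, via the pendant $x$, that $u\in A_1'\cup A_j$, so $u\in A_j$, and then $A_1'\cup A_j$ being a CDS of $H$ with $A_j$ a CDS of $H$... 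I would need to argue $A_j$ is then already a CDS of $G$. This case analysis around whether $u$ lands in $A_1'$ or in the partner set is where the real work lies.

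The main obstacle I anticipate is exactly this bookkeeping: ensuring that the coalition relationships among $A_2,\dots,A_k$ and the modified $A_1'$ survive the passage from $G$ to $H$, in both directions (a pair forms a coalition in $H$ iff the corresponding pair forms a coalition in $G$, after accounting for $x$). The translation "connected dominating set of $G$ not containing $x$ = connected dominating set of $H$ containing $u$" is clean, but a coalition pair in $G$ might have $x$ distributed into one side, and a set that is a CDS of $H$ but does not contain $u$ is a CDS of neither $G$ nor... it dominates $H$ but not $x$. I would organize the argument by cases on where $u$ sits relative to the coalition partner, and use repeatedly that $x$ is pendant at $u$ so it contributes nothing to connectivity and only the single domination requirement "$u$ or $x$ in the set." Once this translation dictionary is in place, $\Phi'$ is a connected coalition partition of $H$ with $|\Phi'|=|\Phi|=CC(G)$, whence $CC(H)\geq CC(G)$.
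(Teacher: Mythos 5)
Your setup is sound and matches the paper's opening moves: the pendant vertex $x$ cannot be a singleton class of $\Phi$, and a set $T\subseteq V(H)$ is a connected dominating set of $G$ if and only if it is a connected dominating set of $H$ containing $u$. But the proof has a genuine gap exactly where you say ``the real work lies,'' and you leave that work undone. The problematic configuration is a class $A_j$ of $\Phi$ with $x\notin A_j$ and $u\notin A_j$ that happens to be a connected dominating set of $H$: such a set is \emph{not} a connected dominating set of $G$ (it fails only to dominate $x$), so it is a legitimate member of a connected coalition in $G$, yet in $H$ it is already a connected dominating set by itself and therefore cannot form a connected coalition with anything. Since $H$ has no full vertex, $A_j$ also cannot stand alone as a singleton class. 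Hence your $\Phi'=\{A_1',A_2,\dots,A_k\}$ simply fails to be a connected coalition partition of $H$ in this case, and your attempted argument that ``$A_j$ is then already a CDS of $G$'' cannot be completed, precisely because $u\notin A_j$. The same issue can afflict $A_1'$ itself when $u\notin A_1'$.

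The paper's proof supplies the two devices you are missing. First, every class whose image in $V(H)$ \emph{is} a connected dominating set of $H$ is not kept intact but is split into at least two pieces that pairwise form connected coalitions of $H$ (Observation 2.5 on partitioning a connected dominating set); this only increases the number of classes, so the inequality $CC(G)\le CC(H)$ survives. Second, the leftover case where \emph{every} $A_j'$ with $j\ge 2$ is a connected dominating set of $H$ requires finding a partner for $A_1'$; the paper handles this by first fixing an extremal $CC(G)$-partition (the class containing $u$ has maximum size) and then either pulling a single vertex $\{z\}$ out of some $A_2$ to serve as $A_1'$'s coalition partner (subcase where $A_1'\cup\{z\}$ is a connected dominating set of $H$) or deriving a contradiction with the maximality of $|A_1|$ (subcase where no such $z$ exists). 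Without the splitting step and without the extremal choice, your argument does not close.
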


\begin{proof}
 By Theorem \ref{connected}, we know that $CC(G)\geq 2$. Let $v$ be the pendant vertex of $G$ that comes from $K_{2}$, and $w$ be the neighborhood of $v$ in $G$. Let $\Phi=\{A_{1},A_{2},\ldots,A_{CC(G)}\}$ be a $CC(G)$-partition of $G$ that satisfies $w\in A_{1}$ and $|A_{1}|$ is maximum, that is, for every $CC(G)$-partition $\{B_{1},B_{2},\ldots,B_{CC(G)}\}$ of $G$, if $w\in B_{1}$, then $|A_{1}|\geq|B_{1}|$. We say that $\{v\}\notin \Phi$. If not, then $\{v\}$ and $A_{1}$ form a connected coalition of $G$ by Lemma \ref{cut-vertex}. This implies that $A_{1}$ is a connected dominating of $G$, which contradicts that $\Phi$ is a connected coalition partition of $G$.

 Let $\Phi'=\{A'_{1},A'_{2},\ldots,A'_{CC(G)}\}$, where $A'_{i}=A_{i}\backslash\{v\}$ for all $i=\{1,2,\ldots,$ $CC(G)\}$. Since $A_{1}$ is not a connected dominating set of $G$, $A'_{1}$ is not a connected dominating set of $H$. Define
 $$I=\{i\in\{2,3,\ldots,CC(G)\}\mid A'_{i} \ \text{is not a connected dominating set of} \ H\}.$$
 We separate the proof into two cases.

{\bf Case 1.} $I\neq \emptyset$.

Let $\{A'_{j_{1}},A'_{j_{2}},\ldots,A'_{j_{k}}\}$ be a partition of $A'_{j}$ that satisfies the condition in Observation \ref{dominating-set} for all $j\in \{2,3,\ldots,CC(G)\} \backslash I$. A partition $\Psi$ of $V(H)$ is constructed as follows:

(i) $A'_{1}\in \Psi$;

(ii) $A'_{i}\in \Psi$ for all $i\in I$;

(iii) $\{A'_{j_{1}},A'_{j_{2}},\ldots,A'_{j_{k}}\} \subseteq \Psi$ for all $j\in \{2,3,\ldots,CC(G)\} \backslash I$.

Recall that $w\in A_{1}$ and $w$ is a cut vertex of $G$. By Lemma~\ref{cut-vertex}, $A_{1}$ and $A_{i}$ form a connected coalition of $G$ for all $i\in \{2,3,\ldots,CC(G)\}$. Then $A'_{1}$ and $A'_{j}$ form a connected coalition of $H$ for all $j\in I$. Therefore, $\Psi$ is a connected coalition partition of $H$, and so $CC(G)\leq |\Psi|\leq CC(H)$.

{\bf Case 2.} $I=\emptyset$.

Recall that $A'_{1}$ is not a connected dominating set of $H$. We now divided the proof into two subcases.
\begin{enumerate}
\item[\textbf{(2-1)}]  $A'_{1}\cup \{u\}$ is a connected dominating set of $H$ for some $u\in V(H)\backslash A'_{1}$.
 \end{enumerate}

In this case, without loss of generality, we assume that $u\in A'_{2}$. Let $\{A'_{j_{1}},A'_{j_{2}},\ldots,A'_{j_{k}}\}$ is a partition of $A'_{j}$ that satisfies the condition in Observation \ref{dominating-set} for all $j\in \{3,\ldots,CC(G)\}$. A partition $\Psi$ of $V(H)$ is constructed as follows:

 (i) $A'_{1}\in \Psi$ and $\{u\}\in \Psi$;

 (ii) $\{A'_{j_{1}},A'_{j_{2}},\ldots,A'_{j_{k}}\}\subseteq \Psi$ for all $j\in \{3,\ldots,CC(G)\}$;

 (iii) If $A'_{2}\backslash \{u\}$ is not a connected dominating set of $H$, then we take $A'_{2}\backslash \{u\}\in \Psi$. If $A'_{2}\backslash \{u\}$ is a connected dominating set of $H$, then we take $\{A'_{2_{1}},A'_{2_{2}},\ldots,A'_{2_{k}}\}\subseteq \Psi$, where $\{A'_{2_{1}},A'_{2_{1}},\ldots,A'_{2_{k_{2}}}\}$ is a partition of $A'_{2}\backslash \{u\}$ that satisfies the condition in Observation~\ref{dominating-set}.

It is obvious that $\Psi$ is a connected coalition partition of $H$. Therefore, $CC(G)\leq |\Psi|\leq CC(H)$.

\begin{enumerate}
\item[\textbf{(2-2)}]  $A'_{1}\cup \{u\}$ is not a connected dominating set of $H$ for all vertex $u\in V(H)\backslash A'_{1}$.
 \end{enumerate}

Since $H$ has no full vertex and $I=\emptyset$, $|A'_{2}|\geq 2$. For a vertex $u\in A'_{2}$, a partition $\Theta$ of $V(G)$ is constructed as follows:

(i) $A_{1}\cup \{u\}\in \Theta$;

(ii) $A_{2}\backslash \{u\}\in \Theta$;

(iii) $A_{i}\in \Theta$ for all $i\in \{3,\ldots,CC(G)\}$.

Since $A'_{1}\cup \{u\}$ is not a connected dominating set of $H$, $A_{1}\cup \{u\}$ is not a connected dominating set of $G$. By Lemma~\ref{cut-vertex}, $A_{1}$ and $A_{i}$ form a connected coalition of $G$ for all $i\in \{2,3,\ldots,CC(G)\}$. Therefore, $\Theta$ is a $CC(G)$-partition of $G$. However, $|A_{1}\cup \{u\}|\geq |A_{1}|$, which contradicts the choice of the set $A_{1}$. This proves Lemma~\ref{degree 1}.
\end{proof}

\textbf{Proof of Theorem \ref{unicycle graph}:} By Lemma~\ref{cycle} and Observation~\ref{full-vertex}, the conclusion holds for $G=C_{3}$, $G=C_{4}$ and $G\in \mathcal{G}$. Thus, we assume that $G$ has no full vertex and $n\geq 5$.

Let $X=\{v\in V(G)\mid G-v\ \text{is not connected}\}$ and $Z=\{v\in V(G)\mid deg(v)=1\}$. Then $V(G)=X\cup Y\cup Z$. It is easy to see that if $|Y|\leq 1$ or $G[Y]=K_{2}$, then $X$ is a connected dominating set of $G$. Therefore, $CC(G)=2$ by Theorem \ref{cut-set}.


We now need only to consider that $|Y|\geq 3$ and $Y$ consists of two non-adjacent vertices of $C_{m}$. Let $C_{m}=v_{1}v_{2}\cdots v_{m}v_{1}$. It is obvious that $m\geq 4$ and if $|Y|\geq 3$, then $Y$ contains two non-adjacent vertices of $C_{m}$. Without loss of generality, we assume that $v_{i},v_{j}\in Y$ and $v_{i}v_{j}\notin E(G)$ for some $i,j\in \{1,2,\ldots,m\}$. Let $C_{4}+e$ be the graph obtained by identifying a vertex of $C_{4}$ and a vertex of $K_{2}$, see Figure~\ref{K3} (b). It is not hard to check that $CC(C_{4}+e)=3$. By Lemmas~\ref{cycle} and \ref{degree 1}, $CC(G)\leq CC(C_{m})=3$ if $m\geq 5$ and $CC(G)\leq CC(C_{4}+e)=3$ if $m=4$. On the other hand, note that $G-\{v_{i},v_{j}\}$ is not connected. Therefore, $\{\{v_{1}\},\{v_{k}\},V(G)\backslash\{v_{1},v_{k}\}\}$ is a connected coalition partition of $G$ and so $CC(G)=3$. This completes the proof.

\section{Lower bound of the connected coalition number of products of two graphs}
\noindent

The \emph{Cartesian product} of two graphs $G$ and $H$, denoted by $G\Box H$, is defined as the vertex set $V(G)\times V(H)=\{(u,v)\mid u\in V(G),v\in V(H)\}$ with an edge between vertices $(u_{1},v_{1})$ and $(u_{2},v_{2})$ if either $v_{1}$ is adjacent to $v_{2}$ in $H$ and  $u_{1}=u_{2}$, or $u_{1}$ is adjacent to $u_{2}$ in $G$ and  $v_{1}=v_{2}$.

\begin{theorem}\label{Cartesian}
Let $G$ and $H$ be two connected graphs with at least two vertices. Then $CC(G\Box H)\geq \max\{CC(G)+k_{G},CC(H)+k_{H}\}$, where $k_{G}$ and $k_{H}$ denote the number of full vertices in $G$ and $H$, respectively.
\end{theorem}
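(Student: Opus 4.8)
The plan is to exhibit, for the Cartesian product $G\Box H$, a connected coalition partition of size at least $CC(G)+k_G$; by symmetry the same argument with the roles of $G$ and $H$ swapped gives $CC(H)+k_H$, and taking the maximum finishes the proof. Fix a vertex $h_0\in V(H)$ and consider the ``fiber'' $G_{h_0}=\{(u,h_0)\mid u\in V(G)\}$, which induces a copy of $G$ inside $G\Box H$. Let $\Psi=\{B_1,\dots,B_{CC(G)}\}$ be a $CC(G)$-partition of $G$, and transport it to the fiber, writing $\widehat{B_i}=\{(u,h_0)\mid u\in B_i\}$. The idea is to use the $\widehat{B_i}$ together with one extra singleton for each full vertex of $G$ (those full vertices produce singleton blocks in $\Psi$ already, so some care is needed about double counting — see below), and to absorb all remaining vertices of $G\Box H$, i.e. everything outside the fiber $G_{h_0}$, into the blocks without destroying the connected-coalition structure.

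The key structural fact I would establish first is a transfer lemma: if $B_i$ and $B_j$ form a connected coalition in $G$ (so $B_i\cup B_j$ is a connected dominating set of $G$ but neither part is), then $\widehat{B_i}$ and $\widehat{B_j}$, once we dump the off-fiber vertices suitably, form a connected coalition in $G\Box H$. The point is that a connected dominating set $D$ of $G$ gives $D\times V(H)$ as a connected dominating set of $G\Box H$ (connectivity because $G[D]$ is connected and each $H$-fiber over a vertex of $D$ is connected and attached; domination because every $(u,v)$ has a $G$-neighbor $(u',v)$ or is itself in $D\times V(H)$ with $u\in D$). So the plan is: partition the off-fiber part $V(G)\times(V(H)\setminus\{h_0\})$ by $G$-coordinate according to $\Psi$, i.e. append $\{(u,v)\mid u\in B_i,\ v\neq h_0\}$ to $\widehat{B_i}$, obtaining enlarged blocks $B_i^\ast=B_i\times V(H)$. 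Then $B_i^\ast\cup B_j^\ast=(B_i\cup B_j)\times V(H)$ is a connected dominating set whenever $B_i\cup B_j$ is a connected dominating set of $G$; and I must check neither $B_i^\ast$ nor $B_j^\ast$ alone is one — this follows because if $B_i\times V(H)$ dominated $G\Box H$ then projecting to $G$ would show $B_i$ dominates $G$, and connectivity of $G[B_i]$ would follow too, contradicting that $B_i$ is not a connected dominating set of $G$ (here one uses $|V(H)|\geq 2$ and that $H$ is connected to run the projection argument cleanly; a vertex $(u,v)$ with $v\neq h_0$ and $u$ a full vertex of $G$ is \emph{not} full in $G\Box H$ since $(u,v')$ for $v'$ a non-neighbor of $v$ in $H$ exists only if $\mathrm{diam}(H)\ge 2$, so I should instead argue full vertices of $G$ remain the relevant singletons via the $h_0$-fiber — this is the delicate point).

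Concretely: the blocks $\{B_i^\ast\}$ already form a connected coalition partition of $G\Box H$ of size $CC(G)$ by the transfer lemma, since every $B_i$ has a partner $B_j$ in $\Psi$ (a full vertex $v$ of $G$ with $\{v\}\in\Psi$ has $\{v\}$ paired — wait, full vertices are exempt in $\Psi$; so I re-examine). To gain the extra $k_G$: for each full vertex $w$ of $G$, the block $\{w\}\times V(H)$ can be split off as $\{(w,h_0)\}$ plus $\{(w,v)\mid v\neq h_0\}$; since $w$ is full in $G$, $\{(w,v)\mid v\in V(H)\}$ dominates everything except possibly nothing — actually $\{w\}\times V(H)$ is a connected dominating set of $G\Box H$, so we can repeatedly peel minimal connected dominating pieces out of it using Observation~\ref{dominating-set}-style reasoning applied inside this set, yielding $|V(H)|$ extra blocks, which is more than $k_G$ — so the bound $CC(G)+k_G$ should come out with room to spare, and I would present the cleanest split giving exactly the claimed count. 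The main obstacle I anticipate is precisely this bookkeeping around full vertices of $G$ (which are exempt from coalition-pairing in $\Psi$ but need partners in $G\Box H$) and verifying that the peeled-off singletons over a full vertex genuinely form connected coalitions with the right partners in $G\Box H$; the domination and connectivity verifications for $B_i\times V(H)$ are routine product-graph arguments once the lemma is stated.
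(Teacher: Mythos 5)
Your construction is essentially the one in the paper: transport a $CC(G)$-partition $\{B_1,\dots,B_{CC(G)}\}$ to the blocks $B_i^\ast=B_i\times V(H)$, observe that $D\times V(H)$ is a connected dominating set of $G\Box H$ exactly when $D$ is one of $G$ (the projection argument for domination and the connectivity of $G[D]\Box H$ both go through, so your transfer lemma is sound), and then split each full-vertex fiber $\{w\}\times V(H)$ --- which is a connected dominating set of size $\ge 2$ and therefore cannot itself be a block --- into at least two coalition-forming pieces via Observation~\ref{dominating-set}. The count $(CC(G)-k_G)+2k_G=CC(G)+k_G$ then comes out as you indicate. Two remarks on the bookkeeping: your naive split of $\{w\}\times V(H)$ into $\{(w,h_0)\}$ and $\{w\}\times(V(H)\setminus\{h_0\})$ can fail (the second piece may again be a connected dominating set, or may induce a disconnected subgraph if $H-h_0$ is disconnected), so the retreat to Observation~\ref{dominating-set} is the right and necessary move; and that observation only guarantees \emph{at least two} pieces, not $|V(H)|$ pieces as you assert --- fortunately two per full vertex is all the count requires.

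The genuine gap is the case $CC(G)=0$. Your proof opens by fixing a $CC(G)$-partition of $G$, which does not exist when $CC(G)=0$, and connected graphs on at least two vertices with connected coalition number $0$ do occur (any star $K_{1,n}$ with $n\ge 2$, for instance). In that case the bound to be proved is $CC(G\Box H)\ge k_G$ with $k_G\ge 1$ (a connected graph with $CC=0$ must contain a full vertex), and there is no family $\{B_i^\ast\}$ into which to absorb the vertices of $G\Box H$ lying outside the full-vertex fibers. The paper handles this separately: it takes $P_i=\{u_i\}\times V(H)$ for the first $k_G-1$ full vertices and lumps \emph{all} remaining vertices into the last block $P_{k_G}$, which is still a connected dominating set, then splits each $P_i$ by Observation~\ref{dominating-set} to get at least $2k_G\ge CC(G)+k_G$ blocks. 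You need an analogous clause; without it the argument does not cover the full statement of the theorem.
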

\begin{proof}
Without loss of generality, we assume that $CC(G)+k_{G}\geq CC(H)+k_{H}$. Since $G$ has at least two vertices, $G\Box H$ has no full vertex. Moreover, since $G$ and $H$ are two connected graphs, $G\Box H$ is also connected. Let $u_{1},u_{2},\ldots,u_{k_{G}}$ be all of the full vertices of $G$.

We first consider that $CC(G)=0$. Since $G$ is a connected graph, $k_{G}\geq 1$. Let $P_{i}=\{(u,v)\in V(G)\times V(H)\mid u=u_{i}, v\in V(H)\}$ for $i\in \{1,2,\ldots,k_{G}-1\}$ and $P_{k_{G}}=V(G\Box H)\setminus(\cup_{i=1}^{k_{G}-1}V(P_{i}))$. Then $P_{i}$ is a connected dominating set of $G\Box H$ for every $i\in \{1,2,\ldots,k_{G}\}$. Further, since $H$ has at least two vertices, $|P_{i}|\geq 2$. Therefore, we can obtain a connected coalition partition of $G\Box H$ with the cardinality at least $2k_{G}$ by Observation~\ref{dominating-set}. Hence, $CC(G\Box H)\geq2k_{G}\geq CC(G)+k_{G}$.

Recall that $CC(G)=1$ if and only if $G=K_{1}$ for any graph $G$. Thus, we now need only to consider that $CC(G)\geq 2$. Let $\Phi=\{A_{1},A_{2},\ldots,A_{CC(G)}\}$ be a $CC(G)$-partition of $G$ and $Q_{i}=\{(u,v)\in V(G)\times V(H)\mid u\in A_{i},v\in V(H)\}$ for all $i\in \{1,2,\ldots,CC(G)\}$. Without loss of generality, we assume that $A_{i}=\{u_{i}\}$ for all $i\in \{1,2,\ldots,k_{G}\}$. Note that $Q_{i}$ is a connected dominating set of $G\Box H$ for all $i\in \{1,\ldots,k_{G}\}$. Moreover, $|B_{i}|\geq 2$ due to $|V(G)|\geq 2$. This implies that there exists a partition $\{Q_{i_{1}},Q_{i_{2}},\ldots,Q_{i_{k_{i}}}\}$ of $Q_{i}$ satisfying the condition in Observation~\ref{dominating-set}. A partition $\Psi$ of $V(G\Box H)$ is constructed as follows:

(i) $Q_{i}\in \Psi$ for all $i\in \{k_{G}+1,k_{G}+2,\ldots,CC(G)\}$;

(ii) $\{Q_{i_{1}},Q_{i_{2}},\ldots,Q_{i_{k}}\} \subseteq \Psi$ for all $i\in \{1,2,\ldots,k_{G}\}$.

Note that for any $A_{i}\in \Phi$, there exists an $A_{j}\in \Phi$ such that $A_{i}$ and $A_{j}$ form a connected coalition of $G$, where $i,j\in \{k_{G}+1,k_{G}+2,\ldots,CC(G)\}$ and $i\neq j$. Therefore, $Q_{i}$ and $Q_{j}$ also form a connected coalition of $G\Box H$. This implies that $\Psi$ is a connected coalition partition of $G\Box H$. Hence, $CC(G\Box H)\geq|\Phi|\geq (CC(G)-k_{G})+2k_{G} \geq CC(G)+k_{G}$. This completes the proof.
\end{proof}

We next improve the lower bound in Theorem \ref{Cartesian} for the connected coalition number of the Cartesian product of two special graphs.

\begin{theorem}
Let $G$ and $H$ be two graphs with order $n\geq 2$ and $m\geq 2$, respectively. If there is a full vertex $u$ in $G$ such that $G-u$ is connected, and there is a full vertex $v$ in $H$ such that $H-v$ is also connected, then $CC(G\Box H)\geq m+n-1$.
\end{theorem}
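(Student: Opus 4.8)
The plan is to construct an explicit connected coalition partition of $G\Box H$ of size $m+n-1$, reusing the idea of splitting fibers into connected dominating sets. Write $V(G)=\{u=u_1,u_2,\ldots,u_n\}$ and $V(H)=\{v=v_1,v_2,\ldots,v_m\}$, where $u$ is the full vertex of $G$ with $G-u$ connected and $v$ is the full vertex of $H$ with $H-v$ connected. For each $i\in\{2,\ldots,n\}$ let $R_i=\{(u_i,w)\mid w\in V(H)\}$ be the $i$-th $H$-fiber, and for each $j\in\{2,\ldots,m\}$ let $C_j=\{(u,v_j)\}$, a single vertex in the $u$-column; finally put all remaining vertices into one block $D=\{(u,v)\}\cup\{(u_i,v)\mid\text{nothing}\}$ — more precisely, I would take $D$ to be whatever vertices are left, namely $D=\{(u,v_1)\}$ together with the parts of column $u$ not yet used. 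Let me instead organize it cleanly: use the $n-1$ fibers $R_2,\ldots,R_n$, the $m-1$ singletons $\{(u,v_j)\}$ for $j=2,\ldots,m$, and one more block $\{(u,v_1)\}$; that is $n-1+(m-1)+1=m+n-1$ blocks, and they clearly partition $V(G\Box H)$.

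The key verifications come next. Since $u$ is full in $G$, each fiber $R_i$ ($i\ge 2$) together with any single vertex of the $u$-column is a connected dominating set of $G\Box H$: indeed $R_i$ dominates all of column $u_i$ internally (as $H-v$ connected, actually $R_i\cong H$ is connected and dominating within its fiber) and dominates every other fiber $R_{i'}$ because $u_i$ is adjacent to $u_{i'}$ for... — here I must be careful: $u_i$ for $i\ge 2$ need not be full in $G$. So $R_i$ alone dominates the $u$-column and those fibers $R_{i'}$ with $u_{i'}\sim u_i$, but not necessarily all fibers. The fix is to pair $R_i$ with a block in the $u$-column: $R_i\cup\{(u,v_j)\}$ is connected (the vertex $(u_i,v_j)\in R_i$ is adjacent to $(u,v_j)$), and it dominates everything, since $(u,v_j)$ sees all of column $u$ and, because $u$ is full in $G$, the vertex $(u,v_k)$ of column $u$ dominates $(u_{i'},v_k)$ for every $i'$, so every fiber is dominated. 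Meanwhile neither $R_i$ alone nor $\{(u,v_j)\}$ alone is a connected dominating set (a single fiber misses a far-away fiber if $G$ is not complete; if $G$ is complete then $u_i$ is also full and one checks $R_i$ still fails to dominate column $u$ outside... actually if $G=K_n$ then $R_i$ IS dominating and connected — I need $G$ not complete here, or I must adjust). This is the step I expect to be the main obstacle: handling the case where $G$ or $H$ is complete, where a single fiber or column already dominates, which could make some blocks fail the "neither alone dominates" requirement and could even make the true value larger than $m+n-1$ (consistent with a lower bound, but the construction's blocks must still each be in a coalition).

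To finish, I would argue as follows. Each fiber block $R_i$ forms a connected coalition with $\{(u,v_2)\}$ (as shown $R_i\cup\{(u,v_2)\}$ is a connected dominating set, and — provided $R_i$ is not itself one, which I ensure by noting that if $R_i$ alone were a connected dominating set then $u_i$ would be full in $G$, so I relabel and absorb such fibers, or handle $G=K_n$ separately via $CC(K_n\Box H)\ge$ direct count). Each singleton $\{(u,v_j)\}$, $j\ge 2$, forms a connected coalition with $R_2$: $\{(u,v_j)\}$ is not a connected dominating set (it fails to dominate, e.g., column $u_i$ for any $i$ with $u_i\not\sim u$ — and if $G$ is complete so every such vertex IS dominated, then $\{(u,v_j)\}$ dominates all of $V(G\Box H)$?? no: $(u,v_j)$ sees $(u,v_k)$ for all $k$ and $(u_i,v_j)$ for all $i$, but not $(u_i,v_k)$ with $i\neq 1, k\neq j$, so it is not dominating as long as $n\ge2$ and $m\ge3$; the borderline $m=2$ needs separate easy treatment). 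And the block $\{(u,v_1)\}$ forms a connected coalition with $R_2$ likewise. Hence all $m+n-1$ blocks lie in connected coalitions, giving a valid connected coalition partition, so $CC(G\Box H)\ge m+n-1$. The cleanest write-up will dispose of the complete-graph and $m=2$ or $n=2$ corner cases first (using Observation~\ref{full-vertex} and the known values $CC(K_t)=t$), then present the generic construction above; the generic fiber-plus-column pairing is the heart of the argument.
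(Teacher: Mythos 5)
Your construction has a genuine gap, and it sits exactly where you flagged "the main obstacle" -- but the obstacle is worse than the complete-graph corner case you worried about. The central claim that $R_i\cup\{(u,v_j)\}$ is a dominating set of $G\Box H$ is false in general: your argument says "$(u,v_j)$ sees all of column $u$ and the vertex $(u,v_k)$ of column $u$ dominates $(u_{i'},v_k)$," but $(u,v_k)$ for $k\neq j$ is merely \emph{dominated by} your set, not \emph{in} it, so it cannot dominate anything on the set's behalf. Concretely, a vertex $(u_{i'},v_l)$ with $i'\notin\{1,i\}$, $u_{i'}\not\sim u_i$ and $l\neq j$ has no neighbour in $R_i\cup\{(u,v_j)\}$. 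For example, take $G$ on $\{u_1,\dots,u_5\}$ with $u_1$ full and $G-u_1$ the path $u_2u_3u_4u_5$, and $H=K_3$: then $(u_4,v_3)$ is not dominated by $R_2\cup\{(u_1,v_2)\}$. Worse, in this example the singleton block $\{(u_1,v_2)\}$ has \emph{no} valid partner at all: its union with any other column singleton fails to dominate (two vertices of column $u_1$ miss $(u_2,v_l)$ for a third value of $l$), and its union with any fiber $R_i$ fails as above. So your partition into $H$-fibers plus column singletons is not a connected coalition partition, and no re-pairing of those blocks rescues it; the decomposition itself has to change.

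The paper's proof uses a different split that avoids this entirely: one large block $A_1=\{(u_1,v_1)\}\cup\bigl(V(G-u)\times V(H-v)\bigr)$ together with the $n-1$ singletons $\{(u_i,v_1)\}$ and the $m-1$ singletons $\{(u_1,v_j)\}$. Here $A_1$ already dominates everything (each $(u_1,v_l)$, $l\neq 1$, is adjacent to $(u_2,v_l)\in A_1$ because $u_1$ is full, and symmetrically for row $v_1$), but $G[A_1]$ is disconnected because $(u_1,v_1)$ is isolated in it; adding any one of the $m+n-2$ singletons reconnects it (using that $u$ and $v$ are full and that $(G-u)\Box(H-v)$ is connected), so every singleton forms a connected coalition with $A_1$. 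If you want to keep a fiber-based picture, the lesson is that the "large" block must already be a dominating set missing only connectivity, rather than relying on chains of domination through vertices outside the coalition.
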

\begin{proof}
Let $V(G)=\{u_{1},u_{2},\ldots,u_{n}\}$ and $V(H)=\{v_{1},v_{2},\ldots,v_{m}\}$. Without loss of generality, we assume $u=u_{1}$ and $v=v_{1}$. Let $A_{1}=\{(x,y)\in V(G)\times V(H)\mid x=u_{1} \ \text{and} \ y=v_{1}, \ \text{or}\ x\neq u_{1} \ \text{and} \ y\neq v_{1}\}$, $A_{i}=\{(u_{i},v_{1})\}$ for all $i=2,3,\ldots,n$, and $A_{i}=\{(u_{1},v_{i-n+1})\}$ for all $i=n+1,n+2,\ldots,m+n-1$. Then $A_{1}$ and $A_{i}$ form a connected coalition of $G\Box H$ for all $i=2,3,\ldots,m+n-1$. This implies that $\{A_{1},A_{2},\ldots,A_{m+n-1}\}$ is a connected coalition partition of $G\Box H$. Hence, $CC(G\Box H)\geq m+n-1$.
\end{proof}

The \emph{lexicographic product} of two graphs $G$ and $H$, denoted by $G\circ H$, is defined as the vertex set $V(G)\times V(H)$ with an edge between vertices $(u_{1},v_{1})$ and $(u_{2},v_{2})$ if either $v_{1}$ is adjacent to $v_{2}$ in $H$ and  $u_{1}=u_{2}$, or $u_{1}$ is adjacent to $u_{2}$ in $G$.

\begin{theorem}
Let $G$ and $H$ be two graphs with at least two vertices. Then $CC(G\circ H)\geq CC(G)+k_{G}$, where $k_{G}$ is the number of full vertices in $G$.
\end{theorem}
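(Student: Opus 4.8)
The plan is to mimic the proof of Theorem~\ref{Cartesian} for the Cartesian product, exploiting the fact that the lexicographic product $G\circ H$ contains the Cartesian product structure in a strong sense: for each vertex $u\in V(G)$ the "column" $\{(u,v)\mid v\in V(H)\}$ induces a copy of $H$, and whenever $u_1u_2\in E(G)$ every vertex in column $u_1$ is adjacent to every vertex in column $u_2$. First I would observe that since $G$ has at least two vertices, $G\circ H$ has no full vertex, and since $G$ and $H$ are such that $CC(G)$ is defined we may assume $G\circ H$ is connected (if $G$ is connected then so is $G\circ H$; one should state the standing connectedness assumption as in the earlier theorems). Let $u_1,\dots,u_{k_G}$ be the full vertices of $G$.

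The key point is that a column $Q_u=\{(u,v)\mid v\in V(H)\}$ over a full vertex $u$ of $G$ is a connected dominating set of $G\circ H$: it is connected because $u$ is full in $G$ so the column induces a graph containing a spanning complete multipartite-type structure — more precisely any two vertices $(u,v)$, $(u,v')$ are joined via a third column, or directly if $vv'\in E(H)$ — and it dominates because every vertex $(u',v')$ with $u'\neq u$ is adjacent to all of $Q_u$. Moreover $|Q_u|=|V(H)|\geq 2$. So I would split into the same two cases as before. If $CC(G)=0$, then since $G$ is connected, $k_G\geq 1$; take $P_i=Q_{u_i}$ for $i<k_G$ and $P_{k_G}$ the remaining vertices, each a connected dominating set of size $\geq 2$, and apply Observation~\ref{dominating-set} to each to get a connected coalition partition of size $\geq 2k_G\geq CC(G)+k_G$. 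If $CC(G)=1$ then $G=K_1$, excluded. So assume $CC(G)\geq 2$: take a $CC(G)$-partition $\Phi=\{A_1,\dots,A_{CC(G)}\}$ with $A_i=\{u_i\}$ for $i\leq k_G$, set $Q_i=\{(u,v)\mid u\in A_i,\ v\in V(H)\}$; the columns $Q_1,\dots,Q_{k_G}$ over full vertices are connected dominating sets of size $\geq 2$, so each splits via Observation~\ref{dominating-set}, while for $i,j>k_G$, if $A_i,A_j$ form a connected coalition of $G$ then $Q_i,Q_j$ form one of $G\circ H$ (the blow-up of a connected dominating set of $G$ is a connected dominating set of $G\circ H$, and the blow-up of a non-connected-dominating set is non-connected-dominating since $G$ embeds). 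Assembling gives a connected coalition partition of size $\geq (CC(G)-k_G)+2k_G=CC(G)+k_G$.

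The main obstacle I anticipate is verifying that "$A$ is a connected dominating set of $G$ iff its blow-up $Q_A=\{(u,v)\mid u\in A\}$ is a connected dominating set of $G\circ H$", and the analogous equivalence for connected coalitions; the forward direction (domination and connectivity lift) is routine, but the reverse — that $A_i$ alone being non-dominating in $G$ forces $Q_i$ non-dominating in $G\circ H$ — needs the remark that $G$ is an isometric-enough subgraph (the map $u\mapsto(u,v_0)$ for a fixed $v_0$ shows $G$ is an induced subgraph, so a vertex of $G$ not dominated by $A$ yields a vertex of $G\circ H$ not dominated by $Q_i$). One should also double-check the connectivity of a single column $Q_u$ over a full vertex when $H$ itself is disconnected: it still works because any two vertices of column $u$ have a common neighbour in any other column, so $G\circ H[Q_u]$ has diameter at most $2$ as long as $k_G\geq 1$ or more generally $\deg_G(u)\geq 1$, which holds since $G$ is connected with $\geq 2$ vertices. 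Once these lifting lemmas are in hand, the argument is a line-by-line transcription of the Cartesian case, and I would present it that way rather than re-deriving Observation~\ref{dominating-set}.
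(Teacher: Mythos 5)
Your overall route is exactly the paper's: the published proof of this theorem is literally one sentence saying ``similar to the proof of Theorem~\ref{Cartesian}'', and you carry out that transcription, correctly verifying the lifting facts that the blow-up $Q_A=A\times V(H)$ of a set $A\subseteq V(G)$ is a (connected) dominating set of $G\circ H$ if and only if $A$ is one of $G$ (for $|A|\geq 2$), via the induced copy of $G$ and the observation that there are no edges between $Q_{A_1}$ and $Q_{A_2}$ when there are none between $A_1$ and $A_2$ in $G$.

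There is, however, one genuine error in your write-up: the claim that a single column $Q_u=\{(u,v)\mid v\in V(H)\}$ over a full vertex $u$ induces a connected subgraph of $G\circ H$ even when $H$ is disconnected, ``because any two vertices of column $u$ have a common neighbour in any other column.'' That common neighbour lies \emph{outside} $Q_u$, so it is irrelevant to the connectivity of the \emph{induced} subgraph $G\circ H[Q_u]$, which is required for $Q_u$ to be a connected dominating set. By the definition of the lexicographic product, $(u,v)$ and $(u,v')$ are adjacent iff $vv'\in E(H)$, so $G\circ H[Q_u]$ is an isomorphic copy of $H$ --- exactly as in the Cartesian product --- and is disconnected whenever $H$ is. In that case $Q_u$ is a dominating set that is not connected, Observation~\ref{dominating-set} does not apply to it, and no two-part split of $Q_u$ alone can form a connected coalition (the union is still $Q_u$, which is not connected), so both your $CC(G)=0$ construction (the sets $P_i=Q_{u_i}$) and your $CC(G)\geq 2$ construction (splitting $Q_i$ for $i\leq k_G$) break down. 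The statement as printed assumes only that $H$ has at least two vertices, not that it is connected, so this case is not vacuous. The clean repair is either to add the hypothesis that $H$ is connected (matching Theorem~\ref{Cartesian}, whose proof is being invoked), or, for disconnected $H$, to replace the internal split of $Q_{u_i}$ by pairings that reach outside the column, using the fact that $\{(u_i,v),(w,v')\}$ is a connected dominating set of $G\circ H$ for every $w\neq u_i$; the latter requires a genuinely new bookkeeping argument, not a line-by-line transcription. To be fair, the paper's own one-line proof inherits the same gap, but since you explicitly raised and then dismissed the disconnected-$H$ issue with an invalid argument, it must be flagged.
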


\begin{proof}
Clearly, $CC(G)\neq 1$.
Similar to the proof in Theorem \ref{Cartesian}, we can obtained a connected coalition partition of the cardinality at least $2k_{G}$ for $CC(G)=0$ and the cardinality at least $CC(G)+k_{G}$ for $CC(G)\geq 2$, respectively.
\end{proof}


\section*{Acknowledgements}
\noindent

This work was supported by the National Natural Science Foundation of China [No. 12301455] and Natural Science Foundation of Shandong of China [No. ZR2023QA080].
\section*{References}
\bibliographystyle{model1b-num-names}
\bibliography{<your-bib-database>}

\end{document}